%
%
%
%
\documentclass[12pt]{amsart}
\usepackage{amssymb,latexsym}
\usepackage{amsfonts}
\usepackage{amsmath}
\usepackage[colorlinks,linkcolor=blue,anchorcolor=blue,citecolor=blue]{hyperref}

\newcommand\C{{\mathbb C}}

\newcommand\Q{{\mathbb Q}}
\newcommand\R{{\mathbb R}}
\newcommand\N{{\mathbb N}}
\newcommand\Z{{\mathbb Z}}

\newcommand\Res{\mathrm{Res}}

\newcommand\al{\alpha}
\newcommand\be{\beta}
\newcommand\ga{\gamma}

\newtheorem{theorem}{Theorem}[section]
\newtheorem{lemma}[theorem]{Lemma}

\newtheorem{corollary}[theorem]{Corollary}
\newtheorem{proposition}[theorem]{Proposition}

\theoremstyle{remark}

\numberwithin{equation}{section}



\begin{document}

\title[Counting degenerate polynomials]{Counting degenerate polynomials of fixed degree and bounded height}


\author{Art\= uras Dubickas}
\address{Department of Mathematics and Informatics, Vilnius University, Naugarduko 24,
LT-03225 Vilnius, Lithuania}
\email{arturas.dubickas@mif.vu.lt}

\author{Min Sha}
\address{School of Mathematics and Statistics, University of New South Wales,
 Sydney NSW 2052, Australia}
\email{shamin2010@gmail.com}


\subjclass[2010]{Primary 11C08; Secondary 11B37, 11R06}



\keywords{Degenerate polynomial, linear recurrence sequence, Mahler measure, resultant}

\begin{abstract}
In this paper, we give sharp upper and lower bounds for the number of degenerate monic (and arbitrary, not necessarily monic) polynomials with integer coefficients of fixed degree $n \ge 2$ and height bounded by $H \ge 2$. The polynomial is called {\em degenerate} if
it has two distinct roots whose quotient is a root of unity. In particular, our bounds imply that non-degenerate linear recurrence sequences can be generated randomly.
\end{abstract}

\maketitle



\section{Introduction}

Recall that every linear recurrence sequence $s_0,s_1,s_2,\dots$ of order $n\ge 2$ is defined by the linear relation
\begin{equation}\label{sequence}
s_{k+n}=a_1s_{k+n-1}+\cdots+a_ns_k \quad (k=0,1,2,\dots).
\end{equation}
 Here, we suppose that the coefficients $a_1,\dots,a_n$ and the initial values $s_0,\dots,s_{n-1}$ of the sequence are some elements of a number field $K$, where $a_n\ne 0$ and $s_j \ne 0$ for at least one
 $j$ in the range
$0 \le j \le n-1$.
The characteristic polynomial of this linear recurrence sequence is
$$
f(X)=X^n-a_1X^{n-1}-\cdots-a_n \in K[X].
$$

The sequence \eqref{sequence} is called \emph{degenerate} if $f$ has a pair of distinct roots whose quotient is a root of unity; otherwise the sequence \eqref{sequence} is called \emph{non-degenerate}. It is
well-known that the sequence \eqref{sequence} may have infinitely many zero terms only if it is degenerate, whereas the non-degenerate sequences contain only finitely many zero terms. See, for instance, \cite[Section 2.1]{EvdPSW} for more details and more references.

Since non-degenerate linear recurrence sequences have much more applications in practice, it is important to investigate how often they occur. If we choose $a_1, \dots, a_n$ as rational integers (or rational num\-bers), the results of this paper imply that almost every linear recurrence sequence generated randomly is non-degenerate; see Theorem \ref{main}. Combining with \cite[Theorem 1.1]{Dubickas2014}, we see that almost every randomly generated linear recurrence sequence with integer coefficients is non-degenerate and has a dominant root, that is, it is exactly what we usually prefer it to be.

By adopting this terminology, we say that a polynomial $f \in \C[X]$ is {\it degenerate\/} if
it has a pair of distinct roots whose quotient is a root of unity. Note that there already exist some methods for testing whether a given polynomial with integer coefficients is degenerate or not, see, e.g.,  \cite{Cipu2011}.
In the sequel, all the polynomials we consider have integer coefficients except in few cases when this will be indicated explicitly.

We first define the set $S_n(H)$ of degenerate monic polynomials with integer coefficients of degree $n\ge 2$ and of height at most $H$, that is,
\begin{equation*}
\begin{split}
S_n(H) =  \{f(X) = X^n &+a_1X^{n-1} + \dots + a_n\in \Z[X]~:\\
&f\ \text{is degenerate},\  |a_j| \le H, \ j =1, \ldots, n\}.
\end{split}
\end{equation*}
Throughout, the height of a polynomial is defined to be the largest modulus of its coefficients. Then, we define
$$
D_n(H)=|S_n(H)|,
$$
where $|A|$ is the cardinality of the set $A$.

To state our results we shall use the following standard notation.
Throughout the paper, we use the Landau symbol $O$ and the Vinogradov symbol $\ll$. Recall that the assertions $U=O(V)$ and $U \ll V$ (sometimes we will write this also as $V \gg U$) are both equivalent to the inequality $|U|\le CV$ with some constant $C>0$.
In this paper, the constants implied in the symbols $O, \ll$ and in the phrase ``up to some constant'' only depend on the degree $n$; also, when we say ``finitely many'', this means that the exact quantity also depends on the degree $n$ only. In the sequel, we always assume that $H$ is a positive integer (greater than $1$ if there is the factor $\log H$ in the corresponding formula), and $n$ is an integer greater than 1.

The main result of this paper is the following:

\begin{theorem}\label{main}
For any integers $n\ge 2$ and $H\ge 2$, we have the following sharp bounds for $D_n(H)$:
$$H \ll D_2(H) \ll H,$$
$$H\log H \ll D_3(H) \ll H\log H,$$
$$\qquad\qquad H^{n-2} \ll D_n(H) \ll H^{n-2} \quad (n\ge 4).$$
\end{theorem}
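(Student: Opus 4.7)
The plan is to prove the lower and upper bounds by identifying a dominant source of degeneracy---factorisations of the form $f(X)=(X^2-c)g(X)$---and showing that all other sources contribute at most the same order.

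For the lower bound I exhibit the family $f=(X^2-c)g$ with $c\in\Z\setminus\{0\}$ and $g\in\Z[X]$ monic of degree $n-2$, all of which are degenerate since their roots include $\pm\sqrt{c}$. Writing $f=X^n+\sum a_kX^{n-k}$ and $g=X^{n-2}+\sum b_kX^{n-2-k}$ with $b_0=1$ and $b_j=0$ outside $\{0,\dots,n-2\}$ gives $a_k=b_k-c\,b_{k-2}$, so imposing $|b_k|\le H/(|c|+1)$ forces $|a_k|\le H$. This produces $\gg(H/(|c|+1))^{n-2}$ valid $g$'s per $c$, and summing over $1\le|c|\le H$ yields
\[
\sum_{1\le|c|\le H}\bigl(H/(|c|+1)\bigr)^{n-2}\ \gg\ \begin{cases}H^{n-2}&n\ge 4,\\ H\log H&n=3,\\ H&n=2.\end{cases}
\]
Over-counting is by at most a factor of $\lfloor n/2\rfloor$, since each $f$ admits at most that many distinct factorisations of this shape.

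For the upper bound, take $f\in S_n(H)$ with distinct roots $\al,\be$ of ratio $\zeta=\al/\be$ a primitive $m$-th root of unity. Since $\zeta$ lies in the splitting field of $f$, of degree $\le n!$ over $\Q$, one has $\phi(m)\le n!$ and hence only finitely many $m$ occur. The central structural step is to show that $f$ admits a factor in $\Z[X]$ of the form $\delta(X^m)$ with $\delta$ monic of degree $r\ge 1$: one takes $\delta$ to be the minimal polynomial of $\be^m$ over $\Q$ and verifies, in the ``clean'' case $\Q(\be)\cap\Q(\zeta)=\Q$, that all the roots $\zeta^k\sigma(\be)$ ($0\le k\le m-1$, $\sigma$ a Galois conjugate) of $\delta(X^m)$ must also be roots of $f$. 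Writing $f=\delta(X^m)\,h(X)$ with $h$ monic of degree $n-mr$, the multiplicativity $M(f)=M(\delta(X^m))M(h)=M(\delta)M(h)$ together with $M(f)\ll H$ forces $H(h)\ll H/M(\delta)$, yielding $\ll(H/M(\delta))^{n-mr}$ choices of $h$ per $\delta$. Summing over $\delta$ via standard estimates on monic integer polynomials of bounded Mahler measure (of Chern--Vaaler type), the dominant contribution arises at $m=2$, $r=1$ (i.e.\ $\delta(X^2)=X^2-c$) and is $\ll H^{n-2}$ (respectively $\ll H\log H$ and $\ll H$ for $n=3,2$); other pairs $(m,r)$ and the cyclotomic factors (of Mahler measure $1$) all contribute no more.

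The main obstacle is that the structural statement above is not literally true: the polynomial $X^2+3X+3$ is irreducible and degenerate (its roots $\zeta_3-1$ and $\zeta_3^2-1$ have ratio $-\zeta_3$), yet admits no factor of the form $\delta(X^m)$. Such exceptional irreducible degenerate polynomials exist precisely when $\Q(\be)\cap\Q(\zeta)\ne\Q$. A careful Galois-theoretic case analysis, starting from the integrality of the elementary symmetric functions of $\al=\zeta\be$ and $\be$, identifies these as coming from finitely many families (for example, in degree $2$: $X^2+aX+a^2$, $X^2+2aX+2a^2$, $X^2+3aX+3a^2$ for $\zeta$ of order $3,4,6$ respectively); each such family, multiplied by an arbitrary monic $h$ of degree $n-2$, contributes at most $\ll H^{n-2}$ to the overall count because the relevant Mahler measure grows like $\asymp a^2$. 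Handling these edge cases cleanly, and carefully bounding the over-counting between the various factorisation types, is the principal technical challenge.
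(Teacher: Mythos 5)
Your lower bound is sound (it is a mild variant of the paper's constructions $X^2-a$, $(X+a)(X^2+b)$ and $(X^2+1)g(X)$, and gives the right orders of magnitude). The upper bound, however, rests on a ``central structural step'' that you yourself point out is false, and the repair you sketch is not carried out and does not even cover all the failure modes. The claim that every degenerate $f\in S_n(H)$ has a monic factor of the form $\delta(X^m)$ with $m\ge 2$ fails not only for irreducible exceptional polynomials such as $X^2+3X+3$, but also for reducible $f$ whose degeneracy comes from two roots lying in \emph{different} irreducible factors, neither of which is itself degenerate: the paper shows (proof of its Proposition on $R_4$) that such $f$ have a quartic part $(X^2+uX+v)(X^2+wX\pm v)$, which in general admits no factor $\delta(X^m)$, and for $n>4$ this quartic times an arbitrary monic cofactor must still be counted. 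Your proposed fix --- ``a careful Galois-theoretic case analysis'' reducing the exceptions to finitely many families each contributing $\ll H^{n-2}$ --- is stated only for degree $2$ and asserted for general $n$ without proof; this is precisely the hardest part of the theorem, so the argument as written has a genuine gap rather than a routine omission.

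The paper closes exactly this gap by splitting $D_n=I_n+R_n$ and treating the two pieces by different means, neither of which needs your structural claim. For irreducible degenerate $f$ it partitions the roots into $s$ equivalence classes of equal size $\ell\mid n$, $\ell\ge 2$, sets $\beta_j$ equal to the product of the roots in the $j$-th class, and shows that $g=\prod_{j}(X-\beta_j)$ is a monic integer polynomial of degree $n/\ell$ with $M(g)=M(f)\ll H$ to which only finitely many $f$ correspond; this yields $I_n(H)\ll H^{n/2}$ with no hypothesis on $\Q(\beta)\cap\Q(\zeta)$ and absorbs your ``exceptional'' irreducible polynomials automatically. For reducible $f$ it invokes van der Waerden's bound ($O(H^{n-2})$ reducible monic polynomials of degree $n\ge 5$ whose smallest irreducible factor has degree $\ge 2$), the Mahler-measure factorization $\sum_{k\le H}(H/k)^{n-2}$ for polynomials with a linear factor, and ad hoc arguments for $n=3,4$ including the straddling case above. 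To salvage your approach you would need, at a minimum, the $\beta_j$ device (or an equivalent) for the irreducible case and a separate count of degenerate products of non-degenerate irreducible factors of degree $\ge 2$; your Chern--Vaaler-type summation over $\delta$ is a reasonable way to organize the pieces that do have the shape $\delta(X^m)h$, but it cannot be the whole proof.
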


When $n$ is fixed, Theorem \ref{main} says that the proportion of degenerate monic polynomials among the monic polynomials of degree $n$ and height at most $H$ tends to zero as $H\to \infty$. So, degenerate polynomials cannot be efficiently constructed by a random generation. It is easy to construct reducible degenerate polynomials. But it is not easy to construct irreducible degenerate polynomials of high degree $n$ other than polynomials of the form $g(X^{\ell})$ or $\prod_{j=1}^{\ell} g(\xi_j X)$ with
irreducible (and satisfying some further restrictions) $g \in \Z[X]$ of degree $n/\ell$, where $\xi_1,\dots,\xi_{\ell}$ are the conjugates of a root of unity $\xi_1$ of degree $\ell$.
Several other examples can be found in \cite{Cipu2011}.

In order to obtain lower and upper bounds on the
number $D_n(H)$, we define $I_n(H)$ and $R_n(H)$ as the numbers of irreducible and reducible polynomials $f \in S_n(H)$, respectively, so that
$$
D_n(H)=I_n(H)+R_n(H).
$$

In fact, by an explicit construction, it is quite easy to obtain the claimed lower bound for $D_n(H)$, whereas all the difficulties lie in getting the claimed upper bound for $D_n(H)$. Our approach is to first estimate $I_n(H)$ and $R_n(H)$, respectively, and then sum them up.

Now, we state the following sharp estimates for $I_n(H)$ and $R_n(H)$, and, furthermore, we discover an interesting phenomenon.

\begin{theorem}\label{InH}
Let $n\ge 2$ and $H\ge 1$ be two integers, and let $p$ be the smallest prime divisor of $n$. Then
$$H^{n/p}\ll I_n(H) \ll H^{n/p}.$$
\end{theorem}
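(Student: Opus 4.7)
The plan is to exhibit an explicit family of $\gg H^{n/p}$ irreducible degenerate monic polynomials of degree $n$ and height at most $H$. Fix any prime $q\neq p$ and consider monic polynomials $g\in\Z[X]$ of degree $n/p$, height at most $H$, that are Eisenstein at $q$ (i.e.\ all non-leading coefficients divisible by $q$ and the constant term not divisible by $q^2$); there are $\gg H^{n/p}$ such $g$. For each such $g$, the polynomial $f(X):=g(X^p)$ remains Eisenstein at $q$, because its non-leading coefficients are either zero or coincide with the non-leading coefficients of $g$ (all divisible by $q$) and its constant term is the constant term of $g$; in particular $f$ is irreducible. Its height equals $\|g\|_\infty\le H$, its degree is $n$, and its roots are $\zeta_p^j\beta_i^{1/p}$ where $\beta_i$ ranges over roots of $g$ and $\zeta_p$ is a primitive $p$-th root of unity, so $f$ is manifestly degenerate.

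\textbf{Plan for the upper bound.} Let $f\in\Z[X]$ be monic, irreducible, degenerate of degree $n$ and height at most $H$. Pick a root $\alpha$ of $f$ and a primitive $\ell$-th root of unity $\zeta$ with $\ell>1$ such that $\zeta\alpha$ is also a root. The crucial identity $(\zeta\alpha)^\ell=\alpha^\ell$ implies that the Galois automorphism $\sigma$ sending $\alpha$ to $\zeta\alpha$ fixes $\gamma:=\alpha^\ell$, so $\gamma$ lies in a proper subfield of $\Q(\alpha)$; set $m:=[\Q(\gamma):\Q]$, noting $m\mid n$, $m<n$, and $n/m\le\ell$ since $\alpha$ satisfies $X^\ell-\gamma$ over $\Q(\gamma)$.

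In the \emph{principal} subcase $n/m=\ell$, the polynomial $X^\ell-\gamma$ is the minimal polynomial of $\alpha$ over $\Q(\gamma)$, and taking its norm to $\Q$ yields $f(X)=h(X^\ell)$, where $h\in\Z[X]$ is the minimal polynomial of $\gamma$, of degree $n/\ell$. Since $h(X^\ell)$ merely spreads the coefficients of $h$ over the positions divisible by $\ell$, we have $\|h\|_\infty=\|f\|_\infty\le H$; hence for each fixed $\ell$ the contribution is at most $(2H+1)^{n/\ell}$, and summing over divisors $\ell>1$ of $n$ the dominant term occurs at $\ell=p$, giving $\ll H^{n/p}$. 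In the \emph{non-principal} subcase $n/m<\ell$, Capelli's theorem applied to the reducible polynomial $X^\ell-\gamma\in\Q(\gamma)[X]$ forces $\gamma=\delta^q$ for some prime $q\mid\ell$ and some $\delta\in\Q(\gamma)$, whence $\alpha^{\ell/q}=\xi\delta$ for some $q$-th root of unity $\xi$; iterating this descent one either eventually lands in the principal subcase (already counted above) or concludes that $\alpha$ itself is a root of unity, in which case $f$ is one of the $O(1)$ cyclotomic polynomials of degree $n$.

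\textbf{Main obstacle.} The delicate point is the non-principal subcase, where one must verify rigorously that the Capelli descent terminates either in the principal case or in a cyclotomic $f$, and that the polynomials encountered along the descent remain under control. A cleaner alternative, which would bypass Capelli altogether, would be to prove directly that up to finitely many (cyclotomic) exceptions the root set of an irreducible degenerate $f$ of degree $n$ is stable under multiplication by some primitive $k$-th root of unity for a divisor $k>1$ of $n$; this immediately forces $f(X)=h(X^k)$ and reduces everything to the counting step of the principal subcase.
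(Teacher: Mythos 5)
Your lower bound is correct and is in fact a streamlined version of what the paper does: you transfer the Eisenstein property from $g$ to $f(X)=g(X^p)$ to get irreducibility for free, whereas the paper's construction (Eisenstein at $2$ combined with Cauchy's and Ferguson's lemmas plus a valuation argument on $|f_1(0)|$) is more elaborate only because it needs the stronger conclusion $\ell(f)=p$ for Proposition~\ref{Inl}. For Theorem~\ref{InH} alone your argument suffices (modulo the usual triviality at very small $H$).

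The upper bound, however, has a genuine gap exactly where you flagged it, and neither of your proposed resolutions can work, because the underlying structural claim is false: an irreducible degenerate monic polynomial need not be of the form $h(X^k)$ with $k>1$, nor cyclotomic, and its root set need not be stable under multiplication by any root of unity $\ne 1$. A concrete counterexample is $f(X)=g(\omega X)\,g(\bar\omega X)=X^4-X^3-X^2-2X+4$, where $g(X)=X^2+X+2$ and $\omega$ is a primitive cube root of unity: one checks $f$ has no rational root and no factorization into integer quadratics, so it is irreducible; its roots are $\beta\omega,\beta\omega^2,\bar\beta\omega,\bar\beta\omega^2$ with $\beta=(-1+i\sqrt7)/2$, so it is degenerate; yet it has nonzero coefficients in every degree, so it is not $h(X^k)$ for any $k>1$. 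Tracing your descent on this example: $\ell=3$, $\gamma=\alpha^3=\beta^3=2-\beta$ has degree $m=2$, so $n/m=2<3$ and you are in the non-principal subcase; Capelli gives $\gamma=\beta^3$ with $\beta\in\Q(\gamma)$, and the step $\alpha^{\ell/q}=\xi\delta$ degenerates to $\alpha=\omega\beta$ — the descent stalls without reaching either the principal subcase or a root of unity. These are precisely the polynomials $\prod_j g(\xi_j X)$ mentioned in the paper's introduction, and they must be counted by a different mechanism. The paper's device is to attach to $f$ the monic integer polynomial $g$ of degree $s=n/\ell$ whose roots are the \emph{products} of the roots within each equivalence class (quotient a root of unity); since every such product differs from an $\ell$-th power of a class representative by a root of unity of bounded degree, only $O(1)$ polynomials $f$ share a given $g$, and $M(f)=M(g)$ converts the height bound on $f$ into one on $g$, yielding $I_{n,\ell}(H)\ll H^{n/\ell}$. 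Some argument of this kind (covering the non-$h(X^k)$ polynomials) is indispensable; your principal subcase captures only part of $I_n(H)$.
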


For $R_n(H)$, our bounds are the following:

\begin{theorem}\label{RnH}
For integers $n\ge 2$ and $H\ge 2$, we have
$$R_2(H)=\lfloor \sqrt{H} \rfloor,$$
$$H\log H \ll R_3(H) \ll H\log H,$$
$$\qquad\qquad H^{n-2} \ll R_n(H) \ll H^{n-2} \quad (n\ge 4).$$
\end{theorem}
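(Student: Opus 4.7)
The plan is to derive the lower bounds from explicit constructions and the matching upper bounds by induction on $n$, using Theorem~\ref{main} at smaller degrees as the inductive hypothesis.

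For the lower bounds I would exhibit three families. When $n=2$, the only rational roots of unity are $\pm 1$, so every reducible degenerate monic quadratic is $(X-m)(X+m)=X^{2}-m^{2}$ with $1\le m\le\lfloor\sqrt{H}\rfloor$; this counts $R_{2}(H)=\lfloor\sqrt{H}\rfloor$ exactly. When $n=3$, the polynomials $(X-a)(X^{2}+b)$ with $1\le b\le H$ and $|a|\le H/b$ are reducible, have height at most $H$, and are degenerate (the roots of $X^{2}+b$ have ratio $-1$); they number $\gg\sum_{b=1}^{H}H/b\gg H\log H$. When $n\ge 4$, the family $f=(X^{2}+1)g(X)$ with $g\in\Z[X]$ monic of degree $n-2$ and non-leading coefficients of absolute value at most $H/2$ contributes $\gg H^{n-2}$ distinct reducible degenerate polynomials of height at most $H$, since each coefficient of $f$ is a two-term sum of coefficients of $g$.

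For the upper bound, let $f$ be reducible degenerate of degree $n$ and height $\le H$, factor $f=f_{1}\cdots f_{r}$ into monic irreducibles in $\Z[X]$, and fix distinct roots $\alpha,\beta$ of $f$ with $\alpha/\beta=\zeta$ a root of unity (whose order is bounded in terms of $n$). I split into two cases. In \textbf{Case~A}, either some $f_{i}$ is degenerate, or $r\ge 3$ and $\alpha,\beta$ lie in two distinct factors $f_{i},f_{j}$; in either situation $f=Gh$ with $G\in\Z[X]$ monic degenerate of some degree $k\in[2,n-1]$ (take $G=f_{i}$ or $G=f_{i}f_{j}$ respectively) and $h=f/G$ monic of degree $n-k\ge 1$. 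In \textbf{Case~B}, $r=2$ and $\alpha,\beta$ straddle two non-degenerate irreducible factors $f_{1},f_{2}$.

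In Case~A, $M(G)M(h)=M(Gh)\ll H$ together with the fact that the number of monic polynomials in $\Z[X]$ of degree $d$ with Mahler measure at most $T$ is $\ll T^{d}$ (via Mignotte's bound on coefficients) gives
\[
\#\{f\text{ in Case~A}\}\ll\sum_{k=2}^{n-1}\sum_{G}\bigl(H/M(G)\bigr)^{n-k},
\]
with $G$ ranging over degenerate monic integer polynomials of degree $k$ with $M(G)\ll H$. Grouping by dyadic intervals $M(G)\in[T,2T)$ and invoking Theorem~\ref{main} inductively (giving $\ll T$ for $k=2$, $\ll T\log T$ for $k=3$, and $\ll T^{k-2}$ for $k\ge 4$), each dyadic piece contributes essentially $T^{k-2}(H/T)^{n-k}$, and summing the resulting geometric series yields $O(H^{n-2})$ for $n\ge 4$ and $O(H\log H)$ for $n=3$. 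In Case~B, since $f_{2}$ is the minimal polynomial of $\beta=\alpha/\zeta$, it is completely determined by $f_{1}$ together with finitely many choices of $\zeta$ and of a root of $f_{1}$; moreover, since $|\sigma(\zeta)|=1$ for every embedding $\sigma$, the Weil height satisfies $\h(\alpha)=\h(\beta)$, giving $\log M(f_{1})/k_{1}=\log M(f_{2})/k_{2}$ with $k_{j}=\deg f_{j}$, and combined with $M(f_{1})M(f_{2})\ll H$ this forces $M(f_{j})\ll H^{k_{j}/n}$. Counting via whichever factor has smaller degree then gives $\ll H^{\min(k_{1},k_{2})^{2}/n}\le H^{n/4}$ pairs, which is absorbed into the Case~A bound for every $n\ge 3$. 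The main obstacle is the dyadic bookkeeping in Case~A: one must verify that the logarithmic losses from $D_{2}(T)\ll T$ and $D_{3}(T)\ll T\log T$ combine to yield an extra $\log H$ precisely when $n=3$ and no extra factor when $n\ge 4$, and in particular that the borderline term $(n,k)=(4,3)$ contributes only $O(H\log^{2}H)$, which is absorbed into $H^{2}=H^{n-2}$.
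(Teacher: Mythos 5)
Your proposal is correct, but it reaches the upper bounds by a genuinely different route than the paper. The paper first disposes of all reducible monic polynomials of degree $n\ge 5$ whose smallest irreducible factor has degree $\ge 2$ by quoting van der Waerden's count (Lemma~\ref{reducible}) --- no degeneracy is used there at all --- and then treats the linear-factor case (Lemma~\ref{linear}) by factoring out the linear piece, using multiplicativity of the Mahler measure together with the \emph{trivial} resultant bound $D_{n-1}(H)=O(H^{n-2})$ of Proposition~\ref{trivial} rather than the sharp bounds; the degrees $n=3$ and $n=4$ are then handled separately, with the $n=4$ two-irreducible-quadratics case settled by an explicit coefficient computation ($\eta\eta'=\pm1$, $v^2\le H$, $|uw|\le H+2\sqrt H$). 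You instead run a uniform dichotomy: your Case~A (factor out a degenerate piece $G$ of degree $k\le n-1$, bound the number of $G$ dyadically in $M(G)$ by the inductive hypothesis $D_k(T)\ll T^{\max(k-2,1)}$ up to logs, and multiply by $(H/M(G))^{n-k}$ cofactors) replaces both the paper's Lemma~\ref{linear} and its appeal to van der Waerden, while your Case~B (two non-degenerate irreducible factors straddled by $\al,\be$ with $\al/\be$ a root of unity, so $\h(\al)=\h(\be)$ forces $M(f_j)\ll H^{k_j/n}$ and a count of $O(H^{n/4})$) is a clean, degree-uniform substitute for the paper's ad hoc $n=4$ computation. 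Your approach costs a more careful joint induction (the bound for $R_n$ uses $D_k=I_k+R_k$ for $k<n$, with $I_k$ supplied independently by Theorem~\ref{InH}) and the dyadic bookkeeping you rightly flag at $(n,k)=(4,3)$, but it buys independence from the reducible-polynomial counts of \cite{Waerden1936} and a single argument covering all $n\ge 3$; I checked the exponent arithmetic in both cases ($H^{n-k}T^{2k-n-2}$ summed dyadically, and $H^{k_1^2/n}\le H^{n/4}\ll H^{n-2}$ for $n\ge 3$) and it closes. The lower-bound constructions and the exact evaluation $R_2(H)=\lfloor\sqrt H\rfloor$ coincide with the paper's.
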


From Theorems \ref{InH} and \ref{RnH}, for each
$n\ge 5$,
we have $HI_n(H)\ll R_n(H)$. Roughly speaking, most of the degenerate monic polynomials of degree $n\ge 5$ are reducible. The reason why this is of interest is that the proportion of reducible monic polynomials of degree $n\ge 2$ among all the monic polynomials of degree $n$ and height at most $H$ tends to zero as $H\to \infty$. However, in the set $S_n(H)$, $n\ge 5$, the proportion of irreducible polynomials tends to zero as $H\to \infty$.

It is easy to see that Theorem~\ref{main} is a direct corollary of Theorems \ref{InH} and  \ref{RnH}. We will prove Theorems \ref{InH} and  \ref{RnH} in Sections \ref{InHH} and \ref{RnHH}, respectively.

Actually, in a similar way we can count the number of degenerate
(not necessarily monic)  polynomials  with integer coefficients of fixed degree and bounded height.
Consider the set
\begin{equation*}
\begin{split}
S_n^{*}(H) =  \{f(X) = &a_0X^n  +a_1X^{n-1} + \dots + a_n\in \Z[X]~:\\
&a_0\ne 0, f\ \text{is degenerate},  |a_j| \le H, j=0,1,\dots,n\}.
\end{split}
\end{equation*}
Put $D_n^{*}(H)=|S_n^{*}(H)|$. As above, define $I_n^{*}(H)$ and $R_n^{*}(H)$ as the numbers of irreducible and reducible polynomials $f \in S_n^{*}(H)$, respectively, so that
$$
D_n^{*}(H)=I_n^{*}(H)+R_n^{*}(H).
$$

To make these definitions precise, we recall that,
in general, a non-zero non-unit element of a commutative ring $R$
is said to be {\it irreducible} if it is not a product of two non-units and {\it reducible} otherwise.
In this paper, the polynomials in $\Z[x]$ are called irreducible if they are irreducible in the ring $\Q[x]$. Note that there is a slight difference between the irreducibility of a polynomial with integer coefficients in the rings $\Z[x]$
and $\Q[x]$.  For example, any linear polynomial is irreducible in $\Q[x]$, but, e.g., $ax+a$ with integer $a \ge 2$, is reducible in $\Z[x]$, whereas the polynomial $x^2-1$ is reducible in both rings $\Z[x]$ and $\Q[x]$.

The analogue of Theorem~\ref{main} can be stated as follows:
\begin{theorem}\label{mainn}
For any integers $n\ge 2$ and $H\ge 2$, we have
$$H^2 \ll D_2^{*}(H) \ll H^2,$$
$$H^2\log H \ll D_3^{*}(H) \ll H^2\log H,$$
$$\qquad\qquad H^{n-1} \ll D_n^{*}(H) \ll H^{n-1} \quad (n\ge 4).$$
\end{theorem}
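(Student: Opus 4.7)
My plan mirrors the strategy behind Theorems~\ref{InH} and~\ref{RnH}: write $D_n^*(H)=I_n^*(H)+R_n^*(H)$ and bound each piece by adapting the corresponding monic argument, the new feature being that the leading coefficient $a_0$ is now a free parameter ranging over $\asymp H$ nonzero integers. This extra factor of $H$ precisely explains the exponent shift from $n-2$ in Theorem~\ref{main} to $n-1$ in Theorem~\ref{mainn}.

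For the lower bounds I use explicit constructions. When $n\ge 4$, take
\[
f(X)=(X^2+1)\bigl(b_0 X^{n-2}+b_1 X^{n-3}+\cdots+b_{n-2}\bigr)
\]
with $b_0\ne 0$ and $|b_j|\le H/C$ for a suitable $C=C(n)$: the factor $X^2+1$ forces degeneracy (roots $\pm i$ of ratio $-1$), distinct tuples yield distinct $f$, and there are $\gg H^{n-1}$ of them. For $n=2$ the family $f(X)=a_0 X^2+a_2$ with $a_0 a_2\ne 0$ gives $\gg H^2$, and for $n=3$ the construction producing the $\log H$ factor in $R_3(H)$ is adapted by keeping the leading coefficient as an extra free parameter.

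For the upper bound on $I_n^*(H)$, to each irreducible $f=a_0 X^n+\cdots+a_n\in S_n^*(H)$ I attach the monic polynomial $\widehat f(Y):=a_0^{n-1}f(Y/a_0)\in\Z[Y]$; since its roots $a_0\alpha_i$ have the same pairwise ratios as those of $f$, it is again irreducible and degenerate. The structural analysis underlying Theorem~\ref{InH} shows that the dominant family of such $\widehat f$ consists of those of the form $\widehat f(Y)=h(Y^\ell)$ with $\ell\ge 2$, $\ell\mid n$, and $h\in\Z[Y]$ monic of degree $n/\ell$. Forcing this form back through the substitution $Y=a_0 X$ imposes $a_j=0$ for every $j$ with $\ell\nmid j$, so $f(X)=\widetilde h(X^\ell)$ with $\widetilde h(Y):=a_0 Y^{n/\ell}+a_\ell Y^{n/\ell-1}+\cdots+a_n\in\Z[Y]$ having the same height as $f$. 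For each $\ell$ this gives $\ll H^{n/\ell+1}$ polynomials, hence $I_n^*(H)\ll H^{n/p+1}$, where $p$ is the smallest prime divisor of $n$. This is within the target bounds in all three regimes, and the residual irreducible degenerate polynomials not of the form $h(Y^\ell)$ are absorbed into a lower-order term, as in the proof of Theorem~\ref{InH}.

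For the upper bound on $R_n^*(H)$, Gauss's lemma lets me factor every reducible $f\in S_n^*(H)$ as $f=g_1 g_2$ in $\Z[X]$ with positive degrees summing to $n$; by the multiplicativity of Mahler measure and its comparability with height up to $n$-dependent constants, the heights of both $g_1$ and $g_2$ are $\ll H$. The degeneracy of $f$ splits into three subcases: (A) $g_1$ is degenerate; (B) $g_2$ is degenerate; (C) a pair of roots, one from each factor, has root-of-unity quotient. Subcases (A) and (B) are handled inductively, applying the non-monic bound of Theorem~\ref{mainn} in smaller degree to the degenerate factor and counting the other factor freely within the height constraint. Subcase (C), after fixing the order of the root of unity (bounded in terms of $n$), reduces to the resultant condition $\Res_X(g_1(X),g_2(\zeta X))=0$, enumerated as in the proof of Theorem~\ref{RnH} but now with independent leading coefficients for $g_1$ and $g_2$. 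The main technical obstacle is subcase~(C): one must ensure that the extra freedom from two independent leading coefficients, tangled with the single constraint on the height of $f$, contributes exactly one additional factor of $H$ (rather than $H^2$) beyond the monic bound, yielding the exact exponent $n-1$.
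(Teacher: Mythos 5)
Your overall architecture (split $D_n^*=I_n^*+R_n^*$, adapt the monic arguments, track the extra factor of $H$ from the leading coefficient) matches the paper, and your lower-bound constructions are essentially the ones used there. However, both of your upper-bound arguments have genuine gaps. For $I_n^*(H)$, the reduction via $\widehat f(Y)=a_0^{n-1}f(Y/a_0)$ rests on a structural claim that is not available: the proof of Theorem~\ref{InH} does \emph{not} show that the irreducible degenerate monic polynomials with $\ell(f)=\ell$ are, up to lower order, of the form $h(Y^\ell)$. The Ferguson-type conclusion $f(X)=g(X^{t\ell})$ requires a real root on the relevant circle and is explicitly noted in the paper to work only for odd $n$; for even $n$ there are irreducible degenerate families such as $\prod_{j}g(\xi_j X)$ (with $\xi_j$ conjugate roots of unity) that are not of the form $h(X^\ell)$, and your argument gives no bound for them. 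Moreover $\widehat f$ has height up to $H^n$, so you cannot quote monic counts at height $H$ for the ``residual'' polynomials either. The paper's device is different and avoids all of this: to each irreducible $f$ with classes $C_1,\dots,C_s$ it attaches $g(X)=a_0\prod_{j=1}^s(X-\beta_j)\in\Z[X]$, where $\beta_j$ is the product of the roots in $C_j$; then $\deg g=n/\ell$, $M(g)=M(f)$ (so $H(g)\asymp H(f)$ by \eqref{Mahler}), and each $g$ has only finitely many preimages $f$, giving $I_{n,\ell}^*(H)\ll H^{1+n/\ell}$ directly.

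For $R_n^*(H)$ the plan is sound for the linear-factor and degenerate-factor subcases, but your subcase (C) — both factors non-degenerate with a cross root-of-unity ratio, which is the decisive case for $n=4$ — is only named as ``the main technical obstacle,'' not solved; asserting that the resultant condition $\Res_X(g_1(X),g_2(\zeta X))=0$ should cost one factor of $H$ is not a proof, since the two leading coefficients and the single height constraint on $f$ interact nontrivially. The paper handles this by an explicit computation: writing $g=tX^2+uX+v$, $h=t_1X^2+wX+v_1$, degeneracy forces $|v/t|=|v_1/t_1|$; the real case gives $h=c(tX^2-uX+v)$ and a sum $\sum_c |c|(H/|c|)^{3/2}\ll H^2$, while the complex case is counted by parametrizing $(v_1,v,t_1,t)$ through a reduced fraction $a/b$ with $ab\le H$ (a divisor-sum giving $O(H(\log H)^2)$ such quadruples) together with $|u|\le 2\sqrt{|vt|}$, $|w|\le 2\sqrt{|v_1t_1|}$, for a total of $O(H^2(\log H)^3)\ll H^3$. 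Without an argument of this kind your exponent $n-1$ for $n=4$ is not established.
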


Comparing Theorems~\ref{main} with \ref{mainn}, we see that
in Theorem~\ref{mainn} there is an extra factor of $H$. This phenomenon
occurs naturally in this kind of problems: compare, for instance,
\cite{Chela1963} (monic case) with \cite{mnn} and \cite{Kuba} (general case), or \cite{mnn0} (monic case) with \cite{iss} (both cases; see also
\cite{iss1}). In Section
\ref{DnH*} we shall state Theorems~\ref{InH*} and \ref{RnH*} that
are
analogues of Theorems~\ref{InH} and \ref{RnH} with an extra
factor of $H$ (except for reducible quadratic polynomials when the bounds in monic and arbitrary cases are different, $\sqrt{H}$ and $H \log H$, respectively).

In the next section we give some preliminaries. In Sections~\ref{InHH}
and \ref{RnHH}
we obtain sharp bounds for
$I_n(H)$ (proving Theorem~\ref{InH}) and for
$R_n(H)$ (proving Theorem~\ref{RnH}), respectively. In Section~\ref{DnH*}
we will complete the proof of Theorem~\ref{mainn}. Finally,
in Section~\ref{comment} we will give some explicit formulas and
evaluate some constants in our formulas for polynomials of
low degree.

\section{Preliminaries}\label{preliminary}

In this section, we gather some preliminaries that will be useful later on.

Given a polynomial $$f(X)=a_0X^n+a_1X^{n-1}+\cdots+a_n=a_0 (X-\al_1)\cdots (X-\al_n) \in \C[x],$$ where $a_0 \ne 0$, its {\it height} is defined by $H(f)=\max_{0 \leq j \leq n} |a_j|$,
and its {\it Mahler measure} by
$$
M(f)=|a_0| \prod_{j=1}^n \max\{1,|\al_j|\}.
$$
For each $f \in \C[x]$ of degree $n$, these quantities are related by the following well-known inequality
$$
H(f) 2^{-n} \leq M(f) \leq H(f) \sqrt{n+1}, \notag
$$
for instance, see \cite[(3.12)]{Waldschmidt2000}. So, for fixed $n$, we have
\begin{equation}\label{Mahler}
H(f) \ll M(f) \ll H(f).
\end{equation}

Let $\rho_k(n,H)$ be the number of monic polynomials
$$
f(X) = X^n +a_1X^{n-1} + \dots + a_n\in \Z[X], \quad n\ge 2,
$$
which are reducible in $\Z[X]$ with an irreducible factor of degree $k$,
$1 \le k \le n/2$, satisfying
$$
H(f)\le H.
$$
In \cite{Waerden1936}, van der Waerden  proved the following sharp lower and upper bounds for $\rho_k(n,H)$; see also \cite{Chela1963}.

\begin{lemma}\label{reducible}
For integers $n\ge 3$ and $k\ge 1$, we have
$$H^{n-k}\ll \rho_k(n,H) \ll H^{n-k} \quad \textrm{if} \quad 1 \le k < n/2,$$
$$H^{n-k}\log H\ll \rho_k(n,H) \ll H^{n-k}\log H \quad \textrm{if} \quad  k=n/2.$$
\end{lemma}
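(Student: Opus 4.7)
The plan is to reduce the count of $f$ contributing to $\rho_k(n,H)$ to a count of pairs $(g,h)$ with $f = gh$, where $g$ is monic irreducible of degree $k$ and $h$ is monic of degree $n-k$, and then perform a dyadic decomposition on the Mahler measure $M(g)$. The key inputs are \eqref{Mahler}, the multiplicativity $M(gh) = M(g) M(h)$, and the auxiliary observation (a direct consequence of \eqref{Mahler}) that the number of monic polynomials in $\Z[X]$ of a fixed degree $d \ge 1$ with $M \le T$ is $\ll T^d$. Each such $f$ has at most $\lfloor n/k \rfloor$ distinct monic irreducible factors of degree $k$, so counting pairs $(g,h)$ is equivalent to counting $f$'s up to a factor depending only on $n$. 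Moreover, $H(f) \le H$ and \eqref{Mahler} force $M(g) M(h) \le c_n H$ for some constant $c_n$.

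For the upper bound, I would partition into the dyadic ranges $M(g) \in [2^j, 2^{j+1})$ with $j = 0, 1, \dots, O(\log H)$. Applying the Mahler-measure counting to both factors yields
$$
\rho_k(n, H) \ll \sum_{j} 2^{jk} \bigl(c_n H / 2^j\bigr)^{n-k} = H^{n-k} \sum_{j} 2^{j(2k-n)}.
$$
The geometric sum is $O(1)$ when $k < n/2$ and $O(\log H)$ when $k = n/2$, delivering the claimed upper bounds.

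For the lower bound when $k < n/2$, I would fix one explicit monic irreducible polynomial $g_0 \in \Z[X]$ of degree $k$ (for instance, $g_0(X) = X^k - 2$, irreducible by Eisenstein) and let $h$ range over all monic polynomials of degree $n - k$ with $H(h) \le H/C$, where $C = C(n, g_0)$ is arranged so that $H(g_0 h) \le H$. This immediately produces $\gg H^{n-k}$ distinct reducible $f$'s having $g_0$ as an irreducible factor of degree $k$.

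For the lower bound when $k = n/2$ I must harvest the extra $\log H$ factor, so both $g$ and $h$ must be varied. For each dyadic $S \in \{1, 2, 4, \dots\}$ with $S \le c\sqrt{H}$, I would choose an irreducible monic $g$ of degree $k$ with $M(g) \asymp S$ and a monic $h$ of degree $k$ with $M(h) \asymp H/S$. Each dyadic scale then contributes $\gg S^k (H/S)^k = H^k$ pairs, and summing over the $\asymp \log H$ scales produces $\gg H^k \log H = H^{n-k} \log H$ pairs, hence (after dividing by the at most $2$-to-$1$ map to $f$'s) the desired lower bound. The main obstacle in this last step is the need for $\gg S^k$ monic irreducible integer polynomials of degree $k$ with $M \asymp S$ at every dyadic scale $S$; I would establish this directly (avoiding circularity) by noting that reduction modulo a fixed prime $p$ gives a positive proportion of monic polynomials of degree $k$ that are irreducible over $\F_p$ (by the prime polynomial theorem), and such polynomials are automatically irreducible over $\Z$, so a positive proportion of the $\asymp S^k$ monic integer polynomials of height $\asymp S$ are irreducible.
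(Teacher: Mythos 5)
Your proposal is correct, but there is nothing in the paper to compare it against: the paper does not prove this lemma, it quotes it as a known result of van der Waerden \cite{Waerden1936} (see also Chela \cite{Chela1963}). Judged on its own, your self-contained derivation is the standard modern route and it works. Gauss's lemma reduces the count to monic pairs $(g,h)$ with $f=gh$; the multiplicativity $M(gh)=M(g)M(h)$ together with \eqref{Mahler} converts $H(f)\le H$ into $M(g)M(h)\ll H$; and the dyadic sum $\sum_j 2^{jk}(H/2^j)^{n-k}=H^{n-k}\sum_j 2^{j(2k-n)}$ correctly produces the dichotomy between $k<n/2$ (convergent geometric series) and $k=n/2$ (about $\log H$ equal terms). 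The bounded multiplicity of $(g,h)\mapsto gh$ (at most $n/k$ distinct monic irreducible degree-$k$ divisors of $f$) is exactly what lets you pass between pairs and polynomials, and it also prevents the pairs harvested at different dyadic scales in the $k=n/2$ lower bound from collapsing onto too few $f$'s. Two details deserve one more line each in a final write-up: (i) for genuine disjointness of scales you should pin the \emph{height} of $g$ to a dyadic window, e.g.\ by forcing the constant coefficient into $[S/2,S]$ while imposing the mod-$p$ irreducibility condition on the remaining coefficients, which still yields $\gg S^k$ irreducible $g$; and (ii) the mod-$p$ equidistribution argument requires $S\gg_k 1$, so the $O(1)$ smallest scales should be discarded, which costs nothing against the $\asymp\log H$ total. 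Note finally that your method is entirely in the spirit of the paper: its own Lemma \ref{linear} and Proposition \ref{R4} run the same Mahler-measure factorization computation, with the dyadic decomposition replaced by summation over the exact height $k$ of the small factor (compare \eqref{linear2} and \eqref{R44}).
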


This yields the following corollary.

\begin{corollary}\label{reducible-1}
The number of reducible monic integer polynomials of degree $n \ge 3$ and height at most $H$ is $O(H^{n-1})$.
\end{corollary}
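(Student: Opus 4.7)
The plan is to derive Corollary~\ref{reducible-1} as an immediate consequence of Lemma~\ref{reducible} by a union bound over the possible degrees of the smallest irreducible factor. If $f \in \Z[X]$ is monic, reducible, and of degree $n$, then $f$ factors over $\Z[X]$ into at least two non-constant monic factors, so it has at least one irreducible factor of degree $k$ with $1 \le k \le \lfloor n/2 \rfloor$. Hence the number of reducible monic integer polynomials of degree $n$ and height at most $H$ is bounded by
\[
\sum_{k=1}^{\lfloor n/2 \rfloor} \rho_k(n,H).
\]

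Next I would apply Lemma~\ref{reducible} term by term. For $1 \le k < n/2$ we have $\rho_k(n,H) \ll H^{n-k} \le H^{n-1}$, and the dominant contribution, $H^{n-1}$, comes from $k=1$. When $n$ is even and $k = n/2$ appears in the sum, the bound $\rho_{n/2}(n,H) \ll H^{n/2}\log H$ is still $O(H^{n-1})$ because $n/2 \le n-2$ for $n \ge 4$, so $H^{n/2}\log H = o(H^{n-1})$. Since the number of terms in the sum is bounded by $n/2$, which is a constant depending only on $n$, summing over $k$ preserves the $O(H^{n-1})$ bound.

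The only case to check separately is $n = 3$, where the sum consists of a single term $\rho_1(3,H) \ll H^2 = H^{n-1}$, so the bound again holds. There is no real obstacle here; the argument is essentially bookkeeping, and the only minor subtlety is to verify that the logarithmic factor appearing at the midpoint degree does not spoil the power-of-$H$ bound, which is clear as soon as $n \ge 3$.
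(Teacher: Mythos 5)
Your proof is correct and is exactly the argument the paper intends: the corollary is stated as an immediate consequence of Lemma~\ref{reducible}, obtained by summing $\rho_k(n,H)$ over $1 \le k \le n/2$, with the dominant $H^{n-1}$ contribution from $k=1$ and the midpoint term $H^{n/2}\log H$ absorbed since $n \ge 3$. No gaps.
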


The asymptotical formula for the number of such polynomials was given by Chela in \cite{Chela1963}.

Similarly, if $\rho_k^{*}(n,H)$ is the number of polynomials
$$
f(X) = a_0X^n +a_1X^{n-1} + \dots + a_n\in \Z[X], \quad a_0\ne 0, n\ge 2,
$$
which are reducible in $\Q[X]$ with an irreducible factor of degree $k$, $1 \le k< n/2$, and satisfy $H(f)\le H$, then, by Theorem 4 of \cite{Kuba}, we have the following:

\begin{lemma}\label{reducible2}
For integers $n \ge 3$ and $k\ge 1$, we have
$$H^{n-k+1}\ll \rho_k^{*}(n,H) \ll H^{n-k+1} \quad \textrm{if} \quad 1 \le k<n/2.$$ Furhermore, the number of reducible integer polynomials of degree $n \ge 3$ and height at most $H$ is $O(H^{n})$.
\end{lemma}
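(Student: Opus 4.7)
The strategy is to reduce to the theory of Mahler measure via the multiplicativity $M(gh) = M(g)M(h)$ and the height--Mahler measure comparison \eqref{Mahler}. For the lower bound $\rho_k^*(n,H) \gg H^{n-k+1}$, I would fix a single irreducible $g \in \Z[X]$ of degree $k$ (for example $X^k - 2$, irreducible by Eisenstein's criterion) and consider the products $f = gh$ where $h \in \Z[X]$ ranges over all polynomials of degree $n-k$ with $H(h) \ll H$ chosen so that $H(gh) \le H$. Since $g$ is fixed and nonzero, distinct $h$ produce distinct $f$, contributing $\gg H^{n-k+1}$ admissible polynomials to $\rho_k^*(n,H)$.

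For the upper bound, any reducible $f \in S_n^*(H)$ with an irreducible factor of degree $k$ factors in $\Z[X]$ via Gauss's lemma as $f = gh$ with $g$ primitive and irreducible, $\deg g = k$, $\deg h = n-k$. Multiplicativity of the Mahler measure together with \eqref{Mahler} gives $M(g)M(h) = M(f) \ll H$, so for each fixed admissible $g$ the bound $H(h) \ll M(h) \ll H/M(g)$ leaves $\ll (H/M(g))^{n-k+1}$ candidates for $h$. Therefore
\[
\rho_k^*(n,H) \ll H^{n-k+1} \sum_{g} \frac{1}{M(g)^{n-k+1}},
\]
where the sum runs over primitive irreducible $g$ of degree $k$ with $M(g) \ll H$. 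Using \eqref{Mahler} again, $\#\{g \in \Z[X] : \deg g = k,\, M(g) \le T\} \ll T^{k+1}$, and Abel summation then shows the displayed sum is $O(1)$ precisely when $n-k+1 > k+1$, i.e.\ when $k < n/2$. This convergence step is the crux of the proof and pinpoints exactly why the hypothesis $k < n/2$ appears.

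For the ``furthermore'' assertion, every reducible $f$ of degree $n \ge 3$ has an irreducible factor of some degree $k$ with $1 \le k \le n/2$. When $k < n/2$ the bound just proved gives $\rho_k^*(n,H) \ll H^{n-k+1} \le H^n$. When $k = n/2$ (possible only for even $n \ge 4$), the same scheme applies but the Abel summation is at its boundary, contributing an extra factor of $\log H$ and yielding $O(H^{n/2+1}\log H) = O(H^n)$, since $n\ge 4$ ensures $H^{n/2-1}$ dominates $\log H$. Summing the $O(n)$ contributions gives $O(H^n)$. The main technical obstacle throughout is the careful accounting of the tail sum $\sum_g M(g)^{-(n-k+1)}$: this is what forces the split between the two regimes and is the reason the statement is sharp only for $k < n/2$.
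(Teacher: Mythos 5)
Your argument is correct, but note that the paper does not actually prove this lemma: it is imported wholesale from Theorem~4 of \cite{Kuba} (with the boundary case $k=n/2$ going back to van der Waerden \cite{Waerden1936} and Chela \cite{Chela1963} in the monic setting, cf.\ Lemma~\ref{reducible}). What you have written is therefore a self-contained reproof rather than an alternative to an argument in the text. Your mechanism --- factor $f=gh$ by Gauss's lemma with $g$ primitive irreducible of degree $k$, use $M(g)M(h)=M(f)$ together with \eqref{Mahler} to get $H(h)\ll H/M(g)$, and then control $\sum_g M(g)^{-(n-k+1)}$ by partial summation against the count $\#\{g:\deg g=k,\ M(g)\le T\}\ll T^{k+1}$ --- is exactly the machinery the authors themselves deploy for Lemma~\ref{linear} and Propositions~\ref{R3} and \ref{R4}, except that there the sum over the small factor is organized by its height rather than dyadically by its Mahler measure; the two bookkeeping schemes are equivalent. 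Your analysis correctly isolates the exponent comparison $n-k+1$ versus $k+1$ as the source of both the hypothesis $k<n/2$ and the extra $\log H$ at $k=n/2$, and the lower bound via a fixed Eisenstein factor $X^k-2$ is sound (distinctness of the products $gh$ holds because $\Z[X]$ is a domain, and $H(gh)\ll H(g)H(h)$ by \eqref{Mahler} lets you keep $H(f)\le H$). Two small points worth making explicit: since $M(h)\ge 1$ for a nonzero integer polynomial, one automatically has $M(g)\ll H$, so the sum over $g$ is finite; and a given $f$ may admit several irreducible factors of degree $k$, which causes only harmless overcounting in an upper bound.
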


The asymptotical formula for the number of reducible integer polynomials
of degree $n \ge 2$ and height at most $H$ with a quite complicated constant was recently given in \cite{mnn}
(see \cite[Example 266]{pol}, \cite{dor} and \cite{Kuba} for some previous bounds in this problem). Many results and asymptotic formulas counting algebraic numbers of fixed degree and bounded heights (but other than naive height) have been obtained in \cite{chern} (Mahler measure), \cite{bar3}, \cite{bar4} (multiplicative height) and in some very recent papers \cite{bar1}, \cite{bar2}.

Some special forms of polynomials will play an important role in getting lower and upper bounds.
The one below is nontrivial. It was obtained by Ferguson \cite{Ferguson1997}; see also the previous result of Boyd \cite{boyd}.

\begin{lemma}\label{Ferguson}
If $f \in \Z[X]$ is an irreducible polynomial which has $m$ roots on a circle $|z|=c$, at least one of which is real, then one has $f(X)=g(X^m)$, where the polynomial $g \in \Z[X]$ has at most one real root on any circle in the plane.
\end{lemma}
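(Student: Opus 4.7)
The plan is to establish the following structural claim, from which everything follows: the $m$ roots of $f$ on the circle $|z|=c$ are precisely $\alpha,\omega\alpha,\dots,\omega^{m-1}\alpha$, where $\alpha\in\{c,-c\}$ is the real root and $\omega=e^{2\pi i/m}$. Granting this, I would set $\beta:=\alpha^m\in\R$ and take $g\in\Z[X]$ to be the (suitably scaled) minimal polynomial of $\beta$. The $m$ values $\omega^k\alpha$ are $\Q$-conjugates of $\alpha$ all satisfying $X^m-\beta=0$, so $[\Q(\alpha):\Q(\beta)]=m$ and hence $\deg g=n/m$; comparing leading coefficients then yields $f(X)=g(X^m)$.

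The main obstacle is the structural claim. A priori the ratios $\zeta_i:=\alpha_i/\alpha$ lie on the unit circle and are algebraic, but need not be roots of unity; by Kronecker's theorem it would suffice to show that all of their $\Q$-conjugates also lie in the closed unit disc, but this rigidity must be extracted from the arithmetic. The hypothesis of a real root in $S:=\{\alpha_1,\dots,\alpha_m\}$ is essential: without it the statement fails, as $f(X)=X^4+1$ shows (four roots on $|z|=1$, none real, yet $f$ is not of the form $g(X^4)$). I would use the real-root assumption through complex conjugation, which acts as an involution on $S$ via $\alpha_i\mapsto c^2/\alpha_i$ and fixes $\alpha$; combined with the auxiliary polynomial $F(X):=\prod_{k=0}^{m-1}f(\omega^k X)\in\Z[X^m]$ and the fact that $f$, being irreducible, must divide some factor of $F$, one aims to force each $\omega^k\alpha$ to be a Galois conjugate of $\alpha$. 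This step, which is the heart of Ferguson's argument, is where the real work lies.

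For the final assertion that $g$ has at most one real root on any circle, I would argue by contradiction together with induction on the degree. If $g$ had roots $r$ and $-r$ on some circle $|z|=r$, then $g$ is irreducible of degree $n/m<n$, so applying the lemma inductively yields $g(Y)=g_1(Y^2)$ for some $g_1\in\Z[Y]$. Then $f(X)=g(X^m)=g_1(X^{2m})$, so the roots of $f$ are closed under multiplication by $e^{\pi i/m}$; the orbit of $\alpha$ under this action gives $2m$ distinct roots of $f$ on $|z|=c$, contradicting the hypothesis that exactly $m$ of the roots lie there.
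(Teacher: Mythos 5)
The paper itself offers no proof of this lemma: it is quoted directly from Ferguson \cite{Ferguson1997} (see also Boyd \cite{boyd}), so there is no in-paper argument to measure yours against. Judged on its own terms, your proposal is a plan rather than a proof. All of the content of the lemma is concentrated in your ``structural claim'' that the $m$ roots on $|z|=c$ are $\alpha,\omega\alpha,\dots,\omega^{m-1}\alpha$ with $\omega=e^{2\pi i/m}$, and you explicitly decline to prove it (``this step \dots is where the real work lies''). That claim is tantamount to the conclusion itself: once $f(X)=g(X^m)$ is known, the roots on the circle through the real root $\alpha$ are exactly the orbit of $\alpha$ under multiplication by $\omega$. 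The ingredients you name (Kronecker's theorem, the conjugation involution $z\mapsto \bar z=c^2/z$ on the circle, the auxiliary product $\prod_{k}f(\omega^kX)$) are indeed of the right kind, but listing them is not an argument. In particular the Galois step is genuinely delicate: an automorphism with $\alpha\mapsto\gamma$ sends $\omega\alpha$ to $\omega^{j}\gamma$ for some $j$ depending on the automorphism, so closure of the whole root set under multiplication by $\omega$ does \emph{not} follow formally from $\omega\alpha$ being a conjugate of $\alpha$; this is precisely where Ferguson has to work.

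There are two further lapses. First, even granting the structural claim, the assertion $[\Q(\alpha):\Q(\beta)]=m$ does not follow merely because the $m$ numbers $\omega^k\alpha$ are $\Q$-conjugates of $\alpha$ satisfying $X^m-\beta=0$: the minimal polynomial of $\alpha$ over $\Q(\beta)$ divides $X^m-\beta$, but you must still exclude that it is a \emph{proper} divisor, i.e., that a product of fewer than $m$ of the $\omega^k\alpha$ already lies in $\Q(\beta)$. Second, your counterexample is wrong: $X^4+1$ \emph{is} of the form $g(X^4)$ (take $g(Y)=Y+1$), so it illustrates nothing; a correct witness for the necessity of the real-root hypothesis is $X^4+X^3+X^2+X+1$, which is irreducible with four roots on $|z|=1$, none real, and is not of the form $g(X^4)$. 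Your closing induction for the ``at most one real root on any circle'' clause is sound as far as it goes, but it invokes the main statement, which remains unproved; so the proposal as a whole has a genuine gap at its core.
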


The following lemma concerning the upper bound of the moduli of roots of polynomials is a classical result due to Cauchy \cite{Cauchy1} (see also \cite[Theorem 2.5.1]{Mignotte} or \cite[Theorem 1.1.3]{Prasolov}).

\begin{lemma}\label{Cauchy}
All the roots of the polynomial of degree $n \ge 1$
$$
f(X)=a_0X^n+a_1X^{n-1}+\cdots+a_n \in \C[X],
$$
where $a_0\ne 0$ and $(a_1,\dots,a_n)\ne (0,\dots,0),$
are contained in the disc $|z|\le R$, where $R$ is the unique positive solution of the equation
\begin{equation}\label{Cauchy2}
|a_0|X^n-|a_1|X^{n-1}-\cdots-|a_{n-1}|X-|a_n|=0.
\end{equation}
In particular, when $f \in \R[X]$ is the left-hand side of \eqref{Cauchy2}, then
$X=R$ is the unique positive root of $f$.
\end{lemma}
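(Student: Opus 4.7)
The plan is to extract a single positive real quantity $R$ from the majorant equation and then compare it with $|z_0|$ for any complex root $z_0$ of $f$, using the triangle inequality together with monotonicity.

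First I would establish existence and uniqueness of the positive root $R$ of the auxiliary polynomial
$$g(X)=|a_0|X^n-|a_1|X^{n-1}-\cdots-|a_{n-1}|X-|a_n|.$$
For $X>0$, divide through by $X^n$ and consider
$$h(X)=\frac{g(X)}{X^n}=|a_0|-\frac{|a_1|}{X}-\frac{|a_2|}{X^2}-\cdots-\frac{|a_n|}{X^n}.$$
Each term $-|a_j|/X^j$ is non-decreasing on $(0,\infty)$, and since $(a_1,\dots,a_n)\ne(0,\dots,0)$ at least one such term is strictly increasing. Thus $h$ is strictly increasing on $(0,\infty)$, with $\lim_{X\to 0^+}h(X)=-\infty$ and $\lim_{X\to\infty}h(X)=|a_0|>0$. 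By the intermediate value theorem, there is a unique $R>0$ with $h(R)=0$, equivalently $g(R)=0$. (One may alternatively invoke Descartes' rule of signs: $g$ has exactly one change of sign in its coefficient sequence, hence exactly one positive real root.) This already settles the last sentence of the lemma.

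Next, let $z_0\in\C$ be any root of $f$. From $a_0z_0^n+a_1z_0^{n-1}+\cdots+a_n=0$ we obtain
$$a_0z_0^n=-a_1z_0^{n-1}-\cdots-a_n,$$
and taking absolute values together with the triangle inequality gives
$$|a_0|\,|z_0|^n\le |a_1|\,|z_0|^{n-1}+\cdots+|a_{n-1}|\,|z_0|+|a_n|,$$
that is, $g(|z_0|)\le 0$. Note that $|z_0|>0$ (otherwise $z_0=0$ forces $a_n=0$, and we may then consider the case separately; in any event the inequality $|z_0|\le R$ is trivial when $z_0=0$). Hence $h(|z_0|)\le 0=h(R)$, and the strict monotonicity of $h$ forces $|z_0|\le R$, which is exactly the desired conclusion.

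There is no real obstacle here: the only mild point of care is ensuring that $h$ is genuinely strictly monotonic, which requires invoking the hypothesis $(a_1,\dots,a_n)\ne(0,\dots,0)$ so that at least one term $-|a_j|/X^j$ with $j\ge 1$ appears nontrivially; without that hypothesis $g(X)=|a_0|X^n$ has no positive root and the statement would be vacuous. Everything else is a direct triangle inequality argument.
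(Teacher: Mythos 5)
Your proof is correct. Note that the paper does not prove this lemma at all --- it is quoted as a classical result of Cauchy with references to Mignotte--\c{S}tef\u{a}nescu and Prasolov --- so there is nothing to compare against; your argument (dividing by $X^n$ to get a strictly increasing function $h$, locating its unique zero $R$ by the intermediate value theorem, and then applying the triangle inequality to any root $z_0$ to get $h(|z_0|)\le 0=h(R)$) is the standard textbook proof and is complete, including the correct use of the hypothesis $(a_1,\dots,a_n)\ne(0,\dots,0)$ and the trivial case $z_0=0$.
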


This lemma will assist us in constructing irreducible degenerate polynomials explicitly.

We also need the next result about the resultant of the polynomials $f(X)$ and $f(\eta X)$, where
$\eta \ne 1$.

\begin{lemma}\label{resultant}
Let $f(X)=a_0X^n+a_1X^{n-1}+\dots+a_n$ be a polynomial of degree $n\ge 2$ and with unknown integer coefficients, and let $\eta \ne 1$ be a complex number. Assume that $f(X)\ne f(\eta X)$. Then, for any
fixed $a_0,a_1,\dots, a_{n-1}$, the resultant $\Res(f(X), f(\eta X))$ is a non-zero polynomial in $a_n$ of degree at most $n$.
\end{lemma}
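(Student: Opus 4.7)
The plan is to establish two separate claims: (i) $\Res(f(X), f(\eta X))$ has degree at most $n$ when viewed as a polynomial in $a_n$, and (ii) this polynomial is not identically zero. Part (i) is a direct bookkeeping exercise inside the Sylvester determinant; part (ii) is where the hypothesis $f(X)\neq f(\eta X)$ must be used. We may assume $\eta\neq 0$ (the case $\eta=0$ is immediate since $\Res(f(X),a_n)=a_n^n$).

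For (i), the approach is to write down the $2n\times 2n$ Sylvester matrix $S$ of $f(X)$ and $f(\eta X)$. Its top $n$ rows are successive shifts of the vector $(a_0,a_1,\ldots,a_n)$, while its bottom $n$ rows are successive shifts of $(a_0\eta^n,a_1\eta^{n-1},\ldots,a_{n-1}\eta,a_n)$. A direct inspection shows that $a_n$ appears only in the last $n$ columns of $S$, and in each such column it occurs in exactly two positions---once in the $f$-block and once in the $f(\eta X)$-block. In the Leibniz expansion of $\det S$, each monomial selects exactly one entry from each of these $n$ columns, so the exponent of $a_n$ in any monomial is at most $n$. Hence $\deg_{a_n}\Res(f(X),f(\eta X))\le n$.

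For (ii), the trick is to decouple $a_n$ from the roots by writing $f(X)=h(X)+a_n$, where $h(X):=a_0X^n+a_1X^{n-1}+\cdots+a_{n-1}X$ is independent of $a_n$. A complex common root $X_0$ of $f(X)$ and $f(\eta X)$ must then satisfy $h(X_0)=h(\eta X_0)=-a_n$, so in particular $X_0$ is a root of $\psi(X):=h(X)-h(\eta X)$. Crucially, $\psi(X)=f(X)-f(\eta X)$ is independent of $a_n$ and, by hypothesis, is a nonzero polynomial of degree at most $n$; therefore it has only finitely many complex roots $X_0$. The set of values of $a_n\in\C$ that force $f(X)$ and $f(\eta X)$ to share a root is thus contained in the finite set $\{-h(X_0):\psi(X_0)=0\}$, which has at most $n$ elements. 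Since $a_0\neq 0$ and $\eta\neq 0$, the leading coefficients of $f(X)$ and $f(\eta X)$ do not vanish, so the resultant vanishes precisely when a common root exists. Consequently $\Res(f(X),f(\eta X))$, as a polynomial in $a_n$, has at most finitely many zeros in $\C$, and in particular is not the zero polynomial.

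The main obstacle, as I see it, is part (ii): one has to translate the geometric condition ``$f$ and $f(\eta X)$ share a common root'' into a finite algebraic constraint on $a_n$ alone. The mechanism that makes this work is the elementary but essential observation that $f(X)-f(\eta X)$ contains no $a_n$, which converts the common-root condition into a root-finding problem for a fixed nonzero polynomial $\psi$.
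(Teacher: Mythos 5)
Your proof is correct and follows essentially the same route as the paper: both hinge on the observation that $f(X)-f(\eta X)$ is a nonzero polynomial independent of $a_n$, so there are at most $n$ candidate common roots, which forces the set of $a_n$ making the resultant vanish to be finite. The only cosmetic difference is that you conclude directly by parametrizing the bad values as $a_n=-h(X_0)$, whereas the paper reaches the same conclusion by a pigeonhole contradiction (two distinct constant terms would have to share a common root); your Sylvester-matrix degree count matches the paper's remark as well.
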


\begin{proof}
Assume that $x$ is a common root of $f(X)$ and $f(\eta X)$. Then, $x$ is also a root of $f(X)-f(\eta X)$. Since $f(X)-f(\eta X)$ is a non-zero polynomial, there are at most $n$ values of $x$ for which $f(x)-f(\eta x)=0$.

Now, let us fix $a_0,a_1,\dots, a_{n-1}$. Suppose that $\Res(f(X), f(\eta X))$ is zero identically. Then, for any $a_n\in\Z$, $f(X)$ and $f(\eta X)$ have common roots. Since there are at most $n$ values for those possible common roots, there exist $a_n, a_n^{\prime}\in\Z$, $a_n\ne a_n^{\prime}$, such that the two polynomials $a_0X^n+a_1X^{n-1}+\dots+a_{n-1}X+a_n$ and $a_0X^n+a_1X^{n-1}+\dots+a_{n-1}X+a_n^{\prime}$ have common roots. But this is possible only if $a_n= a_n^{\prime}$, which leads to a contradiction.
In addition, by the definition of the resultant by a Sylvester matrix, it is easy to see that the degree of the polynomial $\Res(f(X), f(\eta X))$ in $a_n$ is at most $n$.
\end{proof}

This gives the next upper bounds for $D_n(H)$ (and $D_n^{*}(H)$), which (although they are not sharp for $n>2$) will be useful afterwards.

\begin{proposition}\label{trivial}
For each integer $n\ge 2$, we have $D_n(H)=O(H^{n-1})$
and $D_n^{*}(H)=O(H^n)$.
\end{proposition}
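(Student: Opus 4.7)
The plan is to use the characterisation that $f$ is degenerate iff $\Res(f(X),f(\eta X))=0$ for some root of unity $\eta\ne 1$, combined with Lemma~\ref{resultant}. A key preliminary step is to bound the admissible $\eta$: if $\alpha\ne\beta$ are nonzero roots of an integer polynomial $f$ of degree $n$ with $\alpha=\eta\beta$, then $\eta\in\Q(\alpha,\beta)$, a field of degree at most $n(n-1)$ over $\Q$. Hence if $\eta$ has order $m$ then $\varphi(m)\le n(n-1)$, so $m\le M_n$ for some constant $M_n$ depending only on $n$, and the set $\Omega_n$ of roots of unity $\eta\ne 1$ that can arise as such a ratio is finite of size depending only on $n$.

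For each $\eta\in\Omega_n$ of order $m\ge 2$, I count the polynomials $f$ (monic, or with $a_0\ne 0$) of height at most $H$ for which $f(X)$ and $f(\eta X)$ share a root, splitting into two cases. If $f(X)=f(\eta X)$ identically, then equating coefficients gives $(1-\eta^{n-i})a_i=0$ for every $i$, forcing $a_i=0$ whenever $m\nmid(n-i)$; in the monic case this requires $m\mid n$ and leaves only the $n/m\le n/2$ coefficients at indices $m,2m,\dots,n$ free, contributing $O(H^{n/m})=O(H^{n-1})$ since $n\ge 2$, while in the non-monic case the extra factor of $2H$ for $a_0$ yields $O(H^{n/m+1})=O(H^n)$.

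Otherwise $f(X)\ne f(\eta X)$ as polynomials in $X$, and Lemma~\ref{resultant} applies directly: fixing $a_0,\dots,a_{n-1}$, the resultant $\Res(f(X),f(\eta X))$ is a nonzero polynomial of degree at most $n$ in $a_n$, vanishing for at most $n$ integer values of $a_n$. Multiplying by the $O(H^{n-1})$ (monic) or $O(H^n)$ (non-monic) choices of the remaining coefficients and summing over the finitely many $\eta\in\Omega_n$ gives the claimed bounds $D_n(H)=O(H^{n-1})$ and $D_n^{*}(H)=O(H^n)$. The only non-routine ingredient is the field-degree bound that makes $\Omega_n$ finite; once that is in hand, the counting is a routine application of Lemma~\ref{resultant}, and the identical-case polynomials form a lower-dimensional family that is automatically absorbed into the main bound.
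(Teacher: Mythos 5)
Your proof is correct and follows essentially the same route as the paper's: finiteness of the set of admissible roots of unity $\eta$, followed by an application of Lemma~\ref{resultant} to the vanishing of $\Res(f(X),f(\eta X))$ viewed as a polynomial in $a_n$. The only cosmetic difference is the treatment of the exceptional case --- you split off the polynomials with $f(X)=f(\eta X)$ identically and count their forced-to-vanish coefficients, whereas the paper splits off the larger family with $a_{n-1}=0$ and observes that $a_{n-1}\ne 0$ already guarantees $f(X)\ne f(\eta X)$; both yield the same bounds.
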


\begin{proof}
Notice that, for fixed $n\ge 2$, there are only finitely many
roots of unity which are ratios of two algebraic numbers of
degree at most $n$. (See, e.g., \cite{isacs} or \cite[Corollary 1.3]{DruDub} for a more precise
result asserting that $\deg(\al/\al') \le \deg \al$ whenever $\al, \al'$
are two conjugate algebraic numbers whose quotient $\al/\al'$ is a root of unity.)

If $f(X) = a_0X^n +a_1X^{n-1} + \cdots + a_n\in\Z[X]$ is degenerate, then there exists a root of unity $\eta\ne 1$ which is the ratio of two distinct roots of $f$ (note that there are only finitely many values for such $\eta$). So, $f(X)$ and $f(\eta X)$
have a common root. Hence, the resultant
$$
R_\eta(a_0,a_1,\dots, a_n) =\Res(f(X), f(\eta X))
$$
vanishes. Furthermore, viewing $R_{\eta}(a_0,a_1,\dots, a_n)$ as a polynomial with respect to $a_0,a_1,\dots,a_n$, every point $(a_0,a_1,\dots,a_n)$ at which $R_{\eta}$ vanishes corresponds to a degenerate polynomial $f$.

Clearly, the number of such polynomials $f$ with $a_{n-1}=0$ is $O(H^{n})$. Now, let us assume that $a_{n-1}\ne 0$. Then $f(X)\ne f(\eta X)$. By Lemma \ref{resultant}, if we fix $a_0,a_1,\dots, a_{n-1}$, $R_\eta(a_0,a_1,\ldots, a_n)$ is a non-zero polynomial in $a_n$ of degree at most $n$. Thus, there are at most $n$ values of $a_n$. Hence, the equation $R_\eta(a_0,a_1,\dots, a_n)= 0$ has $O(H^{n})$ integer solutions in variables $a_0,a_1,\dots,a_n$ with $|a_0|,|a_1|, \dots, |a_n| \le H$.
Finally, by summing up the number of these integer solutions running through all possible values of $\eta$, we get $D_n^{*}(H)=O(H^{n})$.

In the monic case, the value of $a_0$ is fixed, $a_0=1$, so, for
$D_n(H)$, we obtain $D_n(H)=O(H^{n-1})$.
\end{proof}

\section{Sharp bounds for $I_n(H)$}\label{InHH}

In this section, we will obtain sharp bounds for $I_n(H)$ (this is perhaps
the most difficult part of the paper).

Let $f\in \Z[X]$ be an irreducible polynomial of degree $n \ge 2$. We say that two of its roots $\al$ and $\be$ belong to the same equivalence class if their quotient is a root of unity.
Suppose that there are $s(f)$ distinct equivalence classes. Using the Galois action on the roots of $f$, it is easy to see that each
equivalence class contains the same number of elements, say, $\ell(f)$ roots of $f$. Thus, as in \cite{Arturas2012} (see also \cite{Schinzel}), we have
\begin{equation}\label{equivalence}
n=s(f)\ell(f).
\end{equation}

Clearly, any irreducible polynomial $f\in\Z[X]$ is degenerate if and only if $\ell(f)\ge 2$. From now on, we will concentrate on monic polynomials and then explain in Section~\ref{DnH*} the differences in the general (non-monic) case.

For integers $n\ge 2, \ell\ge 2$, we define the set
$$
S_{n,\ell}(H)=\{f\in S_n(H): \textrm{$f$ is irreducible, }\ell(f)=\ell \}.
$$
Then, putting $I_{n,\ell}(H)=|S_{n,\ell}(H)|$, we have
$$
I_n(H)=\sum\limits_{\ell=2}^{n} I_{n,\ell}(H).
$$

Evidently, by \eqref{equivalence}, we have $I_{n,\ell}(H)=0$ when $\ell$
does not divide $n$. If $n$ is an integer greater than $1$ and $\ell|n$, we can get the following sharp bounds for $I_{n,\ell}(H)$:

\begin{proposition}\label{Inl}
For any integer $n \ge 2$ and its any divisor $\ell \ge 2$, we have
$$
H^{n/\ell}\ll I_{n,\ell}(H) \ll H^{n/\ell}.
$$
\end{proposition}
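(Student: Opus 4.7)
The proof splits into the lower and upper bounds, with the upper bound being the more delicate part, handled by induction on $n$ based on the maximal multiplicative symmetry of $f$.

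For the lower bound, I will exhibit $\gg H^{n/\ell}$ irreducible monic polynomials in $S_{n,\ell}(H)$ of the form $f(X)=g(X^\ell)$, where $g(Y)\in\Z[Y]$ is monic of degree $s=n/\ell$ and height at most $H$, chosen to be Eisenstein at a fixed prime $p$ (non-leading coefficients divisible by $p$, constant term not divisible by $p^2$) and non-degenerate. Each such $g$ is irreducible by Eisenstein, and $f(X)=g(X^\ell)$ inherits the Eisenstein property, so is also irreducible of degree $n$ and height at most $H$. When $g$ is non-degenerate, the equivalence classes of roots of $f$ are precisely the $\ell$-element $\ell$-th root blocks $\{\beta^{1/\ell}\zeta_\ell^k\}_{k=0}^{\ell-1}$ for each root $\beta$ of $g$, so $\ell(f)=\ell$. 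A direct count gives $\gg H^s$ admissible Eisenstein $g$, while Proposition~\ref{trivial} bounds the number of degenerate $g$ of degree $s$ by $O(H^{s-1})$; subtracting yields the required $\gg H^{n/\ell}$ polynomials.

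For the upper bound, given $f\in S_{n,\ell}(H)$, let $m=m(f)\ge 1$ be the largest integer with $f(X)=g(X^m)$ for some $g\in\Z[Y]$. Then $m\mid n$, $g$ is monic of degree $n/m$ with $H(g)\le H$, and (since $f$ is irreducible) so is $g$. Because the roots of $f$ are the $m$-th roots of those of $g$, a short calculation shows each equivalence class of $f$ has size $m\cdot\ell(g)$; hence $\ell(f)=m\cdot\ell(g)$, which forces $m\mid\ell$. If $m=\ell$, then $g$ is non-degenerate of degree $n/\ell$, and the total number of such monic $g$ with $H(g)\le H$ is at most $(2H+1)^{n/\ell}\ll H^{n/\ell}$. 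If $1<m<\ell$, then $g$ is irreducible degenerate of degree $n/m<n$ with $\ell(g)=\ell/m$, so the inductive hypothesis applied to the smaller degree (with the base case $n=2$, $\ell=2$ following from Proposition~\ref{trivial}) yields $O(H^{(n/m)/(\ell/m)})=O(H^{n/\ell})$ such $g$. Summing over the finitely many divisors $m\ge 2$ of $\ell$ contributes $O(H^{n/\ell})$.

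The main obstacle is the residual case $m(f)=1$, in which $f$ admits no multiplicative root-of-unity symmetry. The plan is to use Lemma~\ref{Ferguson} to rule out most such $f$: if some equivalence class of $f$ contains a real root, Ferguson's lemma produces a decomposition $f(X)=g(X^{m'})$ with $m'\ge\ell\ge 2$, contradicting $m(f)=1$. This case is therefore vacuous whenever $f$ has a real root, in particular whenever $n$ is odd. For even $n$ and $f$ with no real roots, each equivalence class $C$ is either stable under complex conjugation (so $\bar\alpha=\alpha\eta$ for some root of unity $\eta$, pinning $\alpha^2=|\alpha|^2\bar\eta$ to a one-parameter family of rays and thus each class polynomial to a one-parameter family) or paired with a distinct conjugate class $\bar C$. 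The Galois group permutes the $n/\ell$ classes transitively and so relates their class polynomials as conjugates; combining this structural rigidity with the resultant bound of Lemma~\ref{resultant} (applied to $\Res(f(X),f(\eta X))$ for the finitely many relevant roots of unity $\eta$) shows that the residual case contributes at most $O(H^{n/\ell})$, closing the induction.
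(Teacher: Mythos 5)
Your lower bound is correct and is essentially the paper's construction: both take $f(X)=g(X^{\ell})$ with $g$ a non-degenerate Eisenstein polynomial of degree $n/\ell$ (your observation that $g(X^{\ell})$ is itself Eisenstein is in fact a cleaner route to irreducibility of $f$ than the paper's norm argument $|f_1(0)|=|g(0)|^{t/\ell}\notin\Z$). The upper bound, however, has a genuine gap. Your reduction via $m(f)$, the largest $m$ with $f(X)=g(X^m)$, handles the cases $m\mid\ell$, $m\ge 2$ correctly, but everything hinges on the residual case $m(f)=1$, and there you offer only a plan, not a proof. That case is not vacuous for even $n$: the paper's introduction already points to the family $\prod_{j=1}^{\ell}g(\xi_jX)$ with $\xi_1$ a root of unity of degree $\ell$; concretely, $g(\zeta_3X)g(\zeta_3^2X)=X^4-aX^3+(a^2-b)X^2-abX+b^2$ is, for generic $a\ne 0$, irreducible, degenerate with $\ell(f)=2$, and not of the form $h(X^m)$ for any $m\ge 2$. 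For such $f$ your tools do not close the argument: Lemma~\ref{resultant} (fixing $a_1,\dots,a_{n-1}$ and solving for $a_n$) yields only the bound $O(H^{n-1})$ of Proposition~\ref{trivial}, which is far weaker than the required $O(H^{n/\ell})=O(H^{n/2})$ at best; and the remarks about conjugation-stable classes and Galois transitivity, while true, are never converted into a count. So the induction never closes.

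The idea you are missing is the paper's central one: to each $f\in S_{n,\ell}(H)$ with equivalence classes $C_1,\dots,C_s$ ($s=n/\ell$), associate $\beta_j=\prod_{\alpha\in C_j}\alpha$ and $g(X)=\prod_{j=1}^s(X-\beta_j)$. Galois permutes the classes, so $g\in\Z[X]$ is monic of degree $s$; crucially $M(g)=M(f)$, whence $H(g)\ll H(f)\le H$ by \eqref{Mahler}; and each $g$ has only finitely many preimages $f$, because $\beta_j=\alpha_j^{\ell}\eta_j$ with $\eta_j$ a root of unity of bounded degree, so fixing $\beta_j$ leaves finitely many choices for $\alpha_j$ and hence for the whole class $C_j=\{\alpha_j\xi\}$. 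This gives $I_{n,\ell}(H)\ll H^{s}=H^{n/\ell}$ uniformly, with no case distinction on $m(f)$ and no appeal to Ferguson's lemma (which the paper uses only as an alternative argument for odd $n$, exactly the subcase where your approach also works). If you want to salvage your structure, you would need a substitute for this Mahler-measure/finite-fiber argument in the residual case; as written, the proof is incomplete.
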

\begin{proof}
Take any polynomial $f\in S_{n,\ell}(H)$.  Denote, for brevity, $\ell(f)=\ell$ and $s(f)=s$. Suppose that the $s$ equivalence classes of roots of $f$ are $C_1,\dots,C_s$. Let $\be_j$ be the product of all the elements of $C_j$, $1\le j\le s$. Then, every automorphism of the Galois group of
$\text{Gal}(K/\Q)$, where $K$ is the splitting field of $f$, which maps an element of $C_i$ into an element of $C_j$,  maps a root of unity into a root of unity, so it maps all $\ell$ elements
of $C_i$ into $\ell$ elements of $C_j$. Thus,
$\be_1, \dots, \be_s$ are conjugate algebraic numbers. Hence,
\begin{equation}\label{ggg}
g(X)=(X-\be_1)\cdots (X-\be_s)\in \Z[X].
\end{equation}

We claim that there are only finitely many polynomials $f$ corresponding to the same $g$. Indeed, for $1\le j\le s$, since the quotient of any two elements of $C_j$ is a root of unity, we have $\be_j=\al_j^{\ell}\eta_j$ for some $\al_j \in C_j$ and some root of unity $\eta_j$, where the degree of $\eta_j$ is bounded above by a constant depending only on $n$. This implies that $\be_1,\dots,\be_s$ are distinct. Now, for every fixed $\be_j$, where $1\le j\le s$, there are only finitely many possible $\eta_j$, so there are also finitely many $\al_j$.  Thus, if we fix $g$ (that is, fix $\be_1, \dots, \be_s$), then there are only finitely many of such representatives in each equivalence class $\al_1 \in C_1,\dots,\al_s \in C_s$. Since other roots of $f$ have the form $\al_j \xi_i$ with $1 \le j \le s$ and a root of unity $\xi_i$ of degree at most $n$, there are  only finitely many such degenerate polynomials $f \in \Z[X]$. This completes the proof of the claim.

Note that $M(f)=M(g)$, so
$$
H(f)\ll H(g)\ll H(f)
$$
in view of \eqref{Mahler}.
It follows that the number of such polynomials $f \in S_{n,\ell}(H)$ is bounded above by the number of those corresponding monic polynomials $g \in \Z[X]$ of degree $s=n/\ell$ and height at most $H$ (up to some constant), which is $O(H^{n/\ell})$. This proves the upper bound $I_{n,\ell}(H) \ll H^{n/\ell}$.

We remark that, for $n$ odd (but only for $n$ odd!), another proof of the inequality $I_{n,\ell}(H) \ll H^{n/\ell}$
can be given, by applying
Lemma \ref{Ferguson}. Indeed,
take a polynomial $f\in S_{n,\ell}(H)$. Since $n$ is odd, $f$ has at least one real root. Pick one equivalence class $C$ which has been defined above such that there is a real root $\al$ of $f$ contained in $C$. Notice that these $\ell$ roots in $C$ all have the same modulus $|\al|$.
Assume that this circle contains $t \ge 1$ equivalence classes.
(In general, $t$ can be greater than $1$.) Then there are exactly
$t\ell$ conjugates of $\al$ lying on the circle $|z|=|\al|$ and $\al$ is a real conjugate lying on $|z|=|\al|$.
Hence, by Lemma \ref{Ferguson}, we must have $f(X)=g(X^{t\ell})$ for some $g\in\Z[X]$.  In particular, this means that the coefficient for any term $X^k$ of the polynomial $f$ with $\ell\nmid k$ is zero. So, as above, we obtain $I_{n,\ell}(H)\ll H^{n/\ell}$ (for $n$ odd).

For the lower bound, without restriction of generality we may assume that $H \ge 4$. Consider monic polynomials $g$ of degree $m=n/\ell$
of the form
\begin{equation}\label{ggg1}
g(X)=X^m-2b_1X^{m-1}-\dots-2b_{m-1}X-2(2b_m-1),
\end{equation}
where $1 \le b_1,\dots,b_{m-1} \le \lfloor H/2\rfloor$ and $1\le b_m \le \lfloor H/4\rfloor$. There are at least $\lfloor H/4\rfloor^m$
of such polynomials $g$. They are all irreducible, by Eisenstein's criterion with respect to the prime $2$. Note that if $g$ is as above, then the polynomial $f(X)=g(X^{\ell}) \in \Z[X]$ has degree $n$ and height $H(f)=H(g) \le H$. To complete the proof of the lower bound of the proposition, it remains to show that $f \in S_{n,\ell}(H)$.

By Lemma \ref{Cauchy}, let $\ga=\ga_1$ be the unique positive root of $g$. By the choice of coefficients in \eqref{ggg1} and in view of Lemma \ref{Ferguson}, the polynomial $g$ has all its other roots $\ga_2,\dots,\ga_m$ in the disc
$|z|<\ga$. Thus, the irreducible polynomial $g$ has only one root on the circle $|z|=\ga$, this implies that $\ell(g)=1$. So, $g$ is non-degenerate. Hence, none of the quotients $\ga_k/\ga_j$, where $k \ne j$, is a root of unity.  Since the $m\ell=n$ roots of $f$ are exactly $e^{2 \pi i (k-1)/\ell} \ga_j^{1/\ell}$, where $i=\sqrt{-1}$, $1 \le j \le m$ and $1 \le k \le \ell$, we must have $f \in S_{n,\ell}(H)$ if $f$ is irreducible.

Assume that $f$ is reducible and that its irreducible factor $f_1 \in \Z[X]$ has, say, $t \ge 1$ roots on the circle $|z|=\ga_1^{1/\ell}$ including the root $\ga_1^{1/\ell}$. Let $K$ be the splitting field of $f$ over $\Q$. Since $g$ is irreducible, for any $2\le j\le m$, there exists an automorphism of $\text{Gal}(K/\Q)$ that maps $\ga_1$ to $\ga_j$. It follows that $\ga_1^{1/\ell}$ has exactly $t$ conjugates in each set $e^{2 \pi i(k-1)/\ell} \ga_j^{1/\ell}$, where $i=\sqrt{-1}$, $j \ge 2$ is fixed and $k=1,\dots,\ell$.
Thus, as $\deg f_1<\deg f$, we have $1 \le t<\ell$ and the modulus of the product of all the conjugates of $\ga_1^{1/\ell}$ is equal to
$$|f_1(0)|=|\ga_1^{t/\ell} \ga_2^{t/\ell} \cdots \ga_m^{t/\ell}|=
|g(0)|^{t/\ell}=2^{t/\ell}(2b_m-1)^{t/\ell} \in \Z.$$
Now, it is easy to see that this is impossible, because the number $2^{t/\ell}(2b_m-1)^{t/\ell}$ is irrational in view of $t<\ell$.
\end{proof}

Now, we are ready to prove Theorem \ref{InH} which gives sharp bounds for the quantity $I_n(H)$.

\begin{proof}[Proof of Theorem~\ref{InH}]
Since $I_{n,\ell}(H)=0$ when $\ell$
does not divide $n$,
we have
$$
I_n(H)=\sum\limits_{\ell=2,\ell|n}^{n} I_{n,\ell}(H).
$$
The desired result follows immediately from Proposition~\ref{Inl}, since the largest contribution comes from the term $I_{n,p}(H)$, where $p$ is the smallest prime divisor of $n$.
\end{proof}

\section{Sharp bounds for $R_n(H)$}\label{RnHH}

In order to get an upper bound for $R_n(H)$, we need the following lemma, which may be of independent interest.

\begin{lemma}\label{linear}
For each integer $n \ge 4$, the number of polynomials, contained in $S_n(H)$ and having a linear factor, is bounded above by $O(H^{n-2})$.
\end{lemma}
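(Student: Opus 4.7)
My plan is to decompose any $f \in S_n(H)$ having a linear factor as $f(X) = (X-a)\,h(X)$, where $a \in \Z$ and $h \in \Z[X]$ is monic of degree $n-1$. The roots of $f$ are $a$ together with the roots $\beta_1,\dots,\beta_{n-1}$ of $h$, so the degeneracy of $f$ splits into (possibly overlapping) cases: \textbf{(A)} $h$ itself is degenerate; or \textbf{(B)} some root $\beta_i \ne a$ of $h$ satisfies $\beta_i/a = \zeta$ for a root of unity $\zeta \ne 1$. I will bound each case by $O(H^{n-2})$. The key tool throughout is the multiplicativity of the Mahler measure, $M(f) = \max(1,|a|)\,M(h)$, which combined with \eqref{Mahler} yields $|a| \le M(f) \ll H$ and $H(h) \ll M(h) \ll H/\max(1,|a|)$.

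In Case A, I invoke the trivial bound of Proposition \ref{trivial} in the form $D_{n-1}(H') \ll (H')^{n-2}$. For $a = 0$ this already gives $O(H^{n-2})$ admissible $f = X h$; for $a \ne 0$, summation over $|a|=j$ yields
$$\sum_{j \ge 1} 2\, D_{n-1}(CH/j) \ll H^{n-2} \sum_{j \ge 1} j^{-(n-2)} \ll H^{n-2},$$
where the series converges precisely because $n - 2 \ge 2$, i.e., $n \ge 4$.

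In Case B, let $\zeta$ be a primitive $k$-th root of unity with $k \ge 2$, so $a \ne 0$ and $\beta_i = a\zeta$. The minimal polynomial of $a\zeta$ over $\Q$ is
$$P_{a,k}(X) = a^{\phi(k)}\,\Phi_k(X/a) \in \Z[X],$$
of degree $\phi(k)$ with all roots of modulus $|a|$, so $M(P_{a,k}) = |a|^{\phi(k)}$. Since $P_{a,k} \mid h$, I write $h = P_{a,k}\, q$ with $q \in \Z[X]$ monic of degree $d := n-1-\phi(k)$. Multiplicativity then forces $H(q) \ll M(q) \ll H/|a|^{1+\phi(k)}$, so the number of admissible $q$'s is $O\bigl((H/|a|^{1+\phi(k)})^d\bigr)$ in the main range. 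Summation over $a$ gives
$$H^{d} \sum_{j \ge 1} j^{-(1+\phi(k))d} \ll H^{n-1-\phi(k)} \ll H^{n-2},$$
since $\phi(k) \ge 1$ and the series converges as $(1+\phi(k))d \ge 2$ in the dominant range (edge cases $d \in \{0,1\}$ contribute only $O(H)$, which is absorbed). Summing over the finitely many $k \ge 2$ with $\phi(k) \le n-1$ completes Case B.

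The main obstacle lies in Case A: a direct induction on $n$ would force a circular appeal to Theorem \ref{RnH}, which is the very thing we are working toward. I bypass this by using only the weaker but already-available Proposition \ref{trivial}, which becomes strong enough once combined with the Mahler-measure saving $M(h) \le M(f)/|a|$ that lowers the effective height of $h$ inside the sum. The convergence of $\sum j^{-(n-2)}$ is exactly what pins down the hypothesis $n \ge 4$ in the statement.
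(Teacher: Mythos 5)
Your proof is correct, and its skeleton---writing $f=(X-a)h$, using the multiplicativity $M(f)=\max(1,|a|)\,M(h)$ together with \eqref{Mahler} to force $H(h)\ll H/\max(1,|a|)$, and closing with the convergence of $\sum_j j^{-(n-2)}$ (which is exactly where $n\ge 4$ enters)---is the same as the paper's. Your Case A coincides with the paper's treatment: Proposition \ref{trivial} applied in degree $n-1$, then the sum over $|a|$. The two arguments diverge only in Case B. The paper observes that if $h$ has a root $a\zeta$ with $\zeta$ of order $k\ge 3$, then the complex conjugate $a\bar\zeta$ is also a root of $h$ and $(a\zeta)/(a\bar\zeta)=\zeta^{2}\ne 1$ is a root of unity, so $h$ is itself degenerate and this folds back into Case A; the only genuinely new sub-case is $\zeta=-1$, where $h$ acquires a rational linear factor, hence is a \emph{reducible} monic polynomial of degree $n-1$, and the paper quotes van der Waerden's count (Corollary \ref{reducible-1}) to get $O\bigl((H/k)^{n-2}\bigr)$ before summing. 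You instead treat all orders $k\ge 2$ uniformly by extracting the minimal polynomial $a^{\phi(k)}\Phi_k(X/a)$ of $a\zeta$, exploiting its Mahler measure $|a|^{\phi(k)}$ for an extra saving, and counting the monic cofactors of degree $n-1-\phi(k)$ directly. Your route is self-contained (it needs neither Lemma \ref{reducible} nor Corollary \ref{reducible-1}, nor the conjugation trick) and even gives the sharper exponent $n-1-\phi(k)$ for $k\ge 3$; the paper's is shorter because the reduction to ``$h$ reducible'' lets it cite an off-the-shelf bound. In both versions the dominant $O(H^{n-2})$ contribution comes from $k=2$, i.e.\ from $h$ containing the mirrored linear factor. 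The only loose end in your write-up is the implicit truncation of the sums over $|a|=j$ at $j\ll H^{1/(1+\phi(k))}$ (beyond which no admissible $q$ exists, since $M(q)\ge 1$), but the paper's own display \eqref{linear2} glosses over the same point, so this is cosmetic.
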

\begin{proof}
Every polynomial $f$ we consider has the following form
$$
f=g(X)h(X)\in S_n(H),
$$
where $g(X)=X+a$ for some $a\in \Z$, and $h(X)$ is a monic integer polynomial of degree $n-1$. Since $M(f)=M(g)M(h)$, we have
$$
H(f)\ll H(g)H(h)\ll H(f).
$$
Note that the implied constants depend only on the degree $n$, and are independent of $f,g$ and $h$.
If $H(g)=k\le H$ (that is, $a=\pm k$ for $k \ge 2$, and $a=0, \pm 1$ for $k=1$), then we have $H(h)\ll H/k$ for each $k=1,\dots,H$.

Since $f$ is degenerate, $h$ is either degenerate or it has the linear factor $X-a$
(and so $a\ne 0$, and $h$ of degree $n-1$ is reducible).
In both cases,
either
by
Proposition~\ref{trivial} or by Corollary~\ref{reducible-1}, the number of such polynomials $h$ is $O((H/k)^{n-2})$. Notice that the constants implied in the symbols $\ll$ and $O$ depend only on $n$ and are independent of $k$ and $H$, so we can sum them up.
 Then, the number of such polynomials $f$ can be bounded above by
\begin{equation}\label{linear2}
\sum\limits_{k=1}^{H}(H/k)^{n-2}< H^{n-2} \sum\limits_{k=1}^{\infty}(1/k)^{n-2}=\zeta(n-2)H^{n-2}
\end{equation}
up to some constant, where $\zeta$ is the Riemann zeta function. Note that $\zeta(m) \le \zeta(2)=\pi^2/6$  for any integer $m \ge 2$. Since $n\ge 4$, we finally get the desired upper bound $O(H^{n-2})$.
\end{proof}

In fact, for $n\ge 4$, notice that any polynomial $(X^2-1)g(X)$, where $g$ is a monic polynomial with integer coefficients of degree $n-2$ and height $\le \lfloor H/2 \rfloor$, is contained in $S_n(H)$ and has a linear factor.
Hence, the number of reducible polynomials, contained in $S_n(H)$ and having a linear factor, is bounded below by $H^{n-2}$ up to some constant. Thus, combining with Lemma \ref{linear}, we get sharp bounds for the number of such polynomials.

Below, we first deduce an upper bound for $R_n(H)$, $n\ge 5$, and then deal with the exceptional cases $n=3,4$. These exceptional cases will be treated in a more general way that can be easily extended to count non-monic degenerate polynomials in Section \ref{DnH*}.

\begin{proposition}\label{Rn}
For $n\ge 5$, we have $R_n(H)\ll H^{n-2}$.
\end{proposition}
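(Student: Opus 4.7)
The plan is to control each reducible $f \in S_n(H)$ via a factorization $f=gh$, where $g$ is a monic irreducible factor of $f$ of minimal degree $k \in \{1,\dots,\lfloor n/2\rfloor\}$ and $h \in \Z[X]$ is monic of degree $n-k$. The multiplicativity of the Mahler measure combined with~\eqref{Mahler} yields the fundamental inequality $H(g)H(h) \ll H$ that will drive every estimate. For $k=1$, Lemma~\ref{linear} already delivers the desired bound $O(H^{n-2})$, so I may assume $k \ge 2$. The degeneracy of $f$ then falls into one of three disjoint cases: (i) $g$ is degenerate; (ii) $g$ is non-degenerate while $h$ is degenerate; (iii) both $g$ and $h$ are non-degenerate, but there are roots $\alpha$ of $g$ and $\beta$ of $h$ with $\alpha/\beta$ equal to a root of unity $\eta \neq 1$.

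For case~(i), Proposition~\ref{Inl} supplies $O(H_1^{k/p})$ irreducible degenerate $g$ of degree $k$ and height at most $H_1$ (with $p$ the smallest prime divisor of $k$). Combined with the trivial $O((H/H_1)^{n-k})$ count for monic $h$, a dyadic summation in $H_1$ yields $O(H^{n-k}) \le O(H^{n-2})$, since the geometric-series exponent $k/p + k - n \le 3k/2 - n \le -n/4 < -1$ whenever $n \ge 5$ and $k \ge 2$. For case~(ii), Proposition~\ref{trivial} bounds the number of degenerate $h$ of degree $n-k$ and height at most $H_2$ by $O(H_2^{n-k-1})$; pairing with the trivial $O(H_1^k)$ count for monic $g$ reduces the problem to estimating $H^{n-k-1}\sum H_1^{2k-n+1}$, which a short case check over $k \in [2,n/2]$ shows is always $O(H^{n-2})$ for every $n \ge 5$, with the tightest instance $k=\lfloor n/2\rfloor$ giving exactly $H^{n-2}$ (possibly times a harmless $\log H$ that does not materialize at the extremal exponent).

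Case~(iii) is the heart of the argument and the main obstacle. The key lemma I will prove by a Galois-theoretic argument is: for a non-degenerate monic irreducible $g \in \Z[X]$ of degree $k$ and a root of unity $\eta \neq 1$, the minimal polynomial $m_\beta \in \Z[X]$ of $\beta=\alpha/\eta$ is monic irreducible of some degree $k'=rk$ with $r \ge 1$, is distinct from $g$, and satisfies the Mahler-measure identity $M(m_\beta)=M(g)^{r}$. The crucial point is that non-degeneracy of $g$ makes the map $\sigma(\beta)\mapsto\sigma(\alpha)$ on Galois conjugates well-defined: if $\sigma_1(\beta)=\sigma_2(\beta)$, then $\sigma_1(\alpha)/\sigma_2(\alpha)$ is a root of unity and hence equals $1$ by non-degeneracy. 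This map is Galois-equivariant and surjective onto the conjugates of $\alpha$, so its fibers have constant size $k'/k$, and the identity $|\sigma(\beta)|=|\sigma(\alpha)|$ gives the Mahler-measure identity. With the lemma in hand, $f = g\cdot m_\beta \cdot h''$ with $h''\in\Z[X]$ monic of degree $n-k(1+r)$ and $H(g)^{1+r}H(h'') \ll H$; summing $H_1^k(H/H_1^{1+r})^{n-k(1+r)}$ dyadically in $H_1$ produces a bound of the form $H^{n-k(1+r)}+H^{(k+1)/(1+r)}$, possibly with an extra $\log H$ at a borderline exponent, and both terms are $\le H^{n-2}$ because $k(1+r) \ge 4$ and $(k+1)/(1+r) \le n/4+1/2$ for $k \ge 2$, $r \ge 1$, $n \ge 5$. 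The delicate point, where I expect the proof to require the most care, is establishing the Mahler-measure identity together with the divisibility $k \mid k'$, and then tracking the exponents across the finitely many triples $(k,r,\eta)$ so that the cross contribution never exceeds $H^{n-2}$.
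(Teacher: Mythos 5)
Your argument is correct, but it takes a genuinely different and much heavier route than the paper. The paper's proof is two lines: for $n\ge 5$, van der Waerden's bound (Lemma~\ref{reducible}) already shows that \emph{all} reducible monic polynomials of height $\le H$ whose smallest irreducible factor has degree $k\ge 2$ number $O(H^{n-k})$ (or $O(H^{n/2}\log H)$ at $k=n/2$), hence $O(H^{n-2})$ --- degeneracy plays no role there --- and only the linear-factor case requires Lemma~\ref{linear}. You instead never invoke Lemma~\ref{reducible} and exploit degeneracy throughout, which forces the three-way split and, in your case~(iii), a Galois-theoretic lemma: that lemma is sound (well-definedness of $\sigma(\beta)\mapsto\sigma(\alpha)$ does follow from non-degeneracy of the irreducible $g$, equivariance gives constant fibre size and hence $k\mid k'$, and $|\sigma(\beta)|=|\sigma(\alpha)|$ gives $M(m_\beta)=M(g)^r$, with $m_\beta\ne g$ again by non-degeneracy), and the exponent bookkeeping in all three cases does land below $H^{n-2}$. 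What your approach buys is sharper sub-counts and uniformity: it is essentially the analysis the paper is forced to carry out separately for $n=4$ in Proposition~\ref{R4} (where the $k=n/2$ term of Lemma~\ref{reducible} is $H^2\log H$, too large), and your case~(iii) lemma generalizes the ad hoc $\alpha\al'=\beta\be'\eta\eta'$ computation there. What it costs is length and two harmless arithmetic slips worth fixing: in case~(ii) the extremal exponent is not ``exactly $H^{n-2}$'' but $O(H^{(n-1)/2}\log H)$ or $O(H^{n/2})$, comfortably smaller; and in case~(iii) the borderline term of the dyadic sum is $H^{k/(1+r)}$, not $H^{(k+1)/(1+r)}$ --- both still well under $H^{n-2}$ for $n\ge 5$, so the conclusion stands.
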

\begin{proof}
Since $n\ge 5$, by Lemma \ref{reducible}, the number of reducible polynomials $f\in S_n(H)$ whose irreducible factor with smallest degree has degree greater than $1$ is $O(H^{n-2})$. So, we only need to consider those reducible polynomials $f\in S_n(H)$ which have a linear factor. Then, the desired result follows directly from Lemma \ref{linear}.
\end{proof}

\begin{proposition}\label{R3}
We have $R_3(H)\ll H\log H$.
\end{proposition}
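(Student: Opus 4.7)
The plan is to mimic the strategy of Lemma~\ref{linear} at the smaller degree $n=3$, where the linear-factor step unavoidably produces the announced $\log H$ factor. Any reducible $f\in S_3(H)$ has a linear factor, so write $f(X)=(X+a)h(X)$ with $a\in\Z$ and $h$ a monic integer quadratic. From $M(f)=M(X+a)M(h)$ together with \eqref{Mahler}, if $k=\max(1,|a|)$ then $H(h)\ll H/k$, and there are $O(1)$ values of $a$ for each $k$. The degeneracy of $f$ has two possible sources: either $h$ is itself degenerate, or a root $\alpha$ of $h$ pairs with the root $-a$ of the linear factor so that $-a/\alpha=\zeta$ is a root of unity $\zeta\ne 1$.

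In the first case, Theorem~\ref{InH} with $n=2$ combined with the immediate bound $R_2(H)=\lfloor\sqrt{H}\rfloor$ (degenerate reducible monic quadratics are exactly the $X^2-s^2$) gives $D_2(M)\ll M$, so the number of degenerate monic quadratics $h$ with $H(h)\ll H/k$ is $\ll H/k$. Summing over $k$,
\[
\sum_{k=1}^{H}\frac{H}{k}\ll H\log H,
\]
which matches the claimed main term. In the second case, the preamble to Proposition~\ref{trivial} shows that $\zeta$ lies in a finite set (its degree being bounded in terms of $n=3$). For each such $\zeta$, the identity $\alpha=-a/\zeta$ fixes $\alpha$ from $a$, and hence fixes the minimal polynomial of $\alpha$ over $\Q$. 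When $\zeta=-1$, $\alpha=a$ is rational and $h=(X-a)(X-\beta)$ carries a free integer parameter $\beta$; the constant coefficient $a^2\beta$ of $f$ has modulus at most $H$, so $|a|\le\sqrt{H}$ and $|\beta|\ll H/a^2$, producing $\ll\sum_{1\le|a|\le\sqrt{H}}H/a^2\ll H$ polynomials. When $\zeta$ has order $3$, $4$, or $6$, $\alpha$ is non-real, so $h$ is forced to equal the minimal polynomial of $\alpha$ and is determined by $a$ alone; the resulting $f$ has a coefficient of size $\gg|a|^3$, giving only $O(H^{1/3})$ polynomials. Combining the two cases yields $R_3(H)\ll H\log H$.

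The main obstacle I anticipate is the $\zeta=-1$ sub-case just discussed, where $h$ has a free integer parameter $\beta$. Using only the crude bound $|\beta|\ll H/k$ inherited from $H(h)\ll H/k$ would produce a second harmonic sum and cloud the structure; one must instead exploit the sharp coefficient inequality $|a^2\beta|\le H$ to keep this sub-case at $O(H)$, so that the logarithmic factor in the final bound comes exclusively from the degenerate-quadratic case through the harmonic sum $\sum_{k=1}^{H}1/k$. Everything else is routine bookkeeping of the same flavor as the proof of Lemma~\ref{linear}, specialized to $n-2=1$.
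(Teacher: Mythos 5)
Your proposal is correct and follows essentially the same route as the paper: factor out the linear factor $X+a$, use multiplicativity of the Mahler measure together with \eqref{Mahler} to get $H(h)\ll H/k$ with $O(1)$ choices of $a$ per value of $k=\max(1,|a|)$, bound the number of degenerate monic quadratics of height $\ll H/k$ by $O(H/k)$ (Proposition~\ref{trivial}), and sum the harmonic series. The only divergence is the sub-case where $h$ is non-degenerate: you count the polynomials $(X^2-a^2)(X-\beta)$ directly via the coefficient bound $|a^2\beta|\le H$ (and note that the order $3$, $4$, $6$ quotients force $h$ itself to be degenerate or rigid), whereas the paper disposes of this case by swapping the factorization from $(X+a,(X-a)(X+b))$ to $(X+b,X^2-a^2)$ so that the quadratic factor is always degenerate; both arguments are valid.
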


\begin{proof}
Note that any reducible polynomial $f\in S_3(H)$ must have the form
$$
f(X)=g(X)h(X)\in S_3(H),
$$
where $g(X)=X+a$ for some $a\in \Z$, and $h(X)$ is a quadratic degenerate monic polynomial. (In case $h$ is not degenerate it must be of the form $h(X)=(X-a)(X+b)$ with $a,b \in \Z$. Then, we can simply replace the pair $(g(X),h(X))=(X+a,(X-a)(X+b))$ by the pair $(g(X),h(X))=(X+b,(X-a)(X+a))$.) As in the proof of Lemma \ref{linear}, using Proposition~\ref{trivial}, we see that the number of such polynomials $f$ can be bounded above by
\begin{equation}\label{R33}
\sum\limits_{k=1}^{H}H/k=H\sum\limits_{k=1}^{H}1/k\ll H\log H
\end{equation}
up to some constant.
\end{proof}

\begin{proposition}\label{R4}
We have $R_4(H)\ll H^{2}$.
\end{proposition}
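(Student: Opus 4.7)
I will split reducible degenerate quartics $f\in S_4(H)$ by factorization type in $\Z[X]$. Those with a linear factor are already bounded by $O(H^{n-2})=O(H^{2})$ via Lemma~\ref{linear}; the remaining case is $f=g(X)h(X)$ with $g,h$ monic irreducible integer quadratics, and I plan to bound this contribution by summing over $g$. Throughout, $M(g)M(h)=M(gh)\ll H$ forces $H(h)\ll H/H(g)$.

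For a fixed $g$ I distinguish two situations. If $g$ is itself degenerate, then \emph{every} monic integer quadratic $h$ of height $\ll H/H(g)$ produces a degenerate $gh$, giving $O((H/H(g))^{2})$ choices of $h$. Writing $a_k$ for the number of degenerate $g$ of height $k$, Proposition~\ref{trivial} in degree $2$ gives $\sum_{k\le H}a_k\ll H$, so Abel summation applied to $\sum_k a_k(H/k)^2=H^2\sum_k a_k/k^2$ yields the bound $O(H^2)$ for this contribution. If $g$ is not degenerate, then the degeneracy of $gh$ forces either (i) $h$ itself to be degenerate, contributing at most $O(H/H(g))$ choices by the same $D_2$ bound, or (ii) a cross-relation $\alpha/\beta=\zeta$ for some root $\alpha$ of $g$, some root $\beta$ of $h$, and some root of unity $\zeta\ne 1$.

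The key input for (ii) is a cyclotomic observation: since $\zeta=\alpha/\beta\in\Q(\alpha,\beta)$ lies in a number field of degree at most $4$, the order of $\zeta$ satisfies $\varphi(\mathrm{ord}\,\zeta)\le 4$, so $\zeta$ belongs to an explicit finite list (orders $1,2,3,4,5,6,8,10,12$). For each of those finitely many pairs $(\alpha,\zeta)$, the element $\beta=\alpha/\zeta$ is determined and, when it has degree~$2$, so is its minimal polynomial $h\in\Z[X]$; hence only $O(1)$ cross-degenerate $h$ arise for each $g$. Since there are $O(k)$ monic integer quadratics of height exactly $k$, summing over the non-degenerate $g$ gives
$$
\sum_{k=1}^{\lfloor CH\rfloor}O(k)\cdot\bigl(O(H/k)+O(1)\bigr)\ \ll\ H^{2},
$$
where $C$ is the implied constant in $H(g)\ll H$. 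Combined with the degenerate-$g$ case and the linear-factor case, this gives $R_4(H)\ll H^{2}$.

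The main obstacle will be the cross-degeneracy subcase (ii); everything else amounts to Mahler-measure comparisons and partial summation. The cyclotomic bound $[\Q(\zeta):\Q]\le 4$ is what prevents (ii) from being too large, as it reduces counting $h$'s to choosing a root of unity from a finite list and then reading off the minimal polynomial. As the authors remark, this same factor-and-sum template, with Proposition~\ref{trivial} as the black-box input, will adapt directly to the non-monic setting of Section~\ref{DnH*} by allowing $g$ and $h$ to have arbitrary nonzero leading coefficients.
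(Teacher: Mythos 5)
Your proof is correct and follows the paper's overall decomposition (linear factors disposed of by Lemma~\ref{linear}, then $f=gh$ with both quadratic factors monic irreducible, split according to whether one of them is degenerate), but your treatment of the hardest subcase --- both $g$ and $h$ non-degenerate --- is genuinely different. The paper applies a Galois automorphism to the cross-relation $\al=\be\eta$ to get $\al'=\be'\eta'$, multiplies to conclude $\eta\eta'=\pm1$ from $\al\al',\be\be'\in\Z$, hence $g=X^2+uX+v$ forces $h=X^2+wX\pm v$, and then counts triples $(u,v,w)$ subject to $|v|\le\sqrt H$ and $|uw|\le H+2\sqrt H$, obtaining the sharper bound $O(H^{3/2}\log H)$ for this subcase. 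You instead observe that $\zeta=\al/\be$ lies in the field $\Q(\al,\be)$ of degree at most $4$, so $\zeta$ runs over a fixed finite list of roots of unity, and $h$ (the monic minimal polynomial of $\al/\zeta$, when that happens to be a quadratic algebraic integer) is determined by $g$ and $\zeta$ up to $O(1)$ choices; summing $O(k)\cdot O(1)$ over heights $k\ll H$ gives $O(H^2)$. Your argument is softer, avoids the norm computation, and is essentially the same finiteness principle the authors already exploit in Proposition~\ref{trivial}, at the cost of a slightly weaker (but still sufficient) bound in that subcase. Your handling of the degenerate-factor case via $D_2(K)\ll K$ and partial summation is equivalent in effect to the paper's remark that there are only $O(1)$ degenerate monic quadratics of each exact height $k$. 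Both routes land on $R_4(H)\ll H^2$.
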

\begin{proof}
By Lemma \ref{linear}, the number of polynomials contained in $S_4(H)$ and having a linear factor is bounded above by $O(H^2)$. So, we only need to count those polynomials which can be factored into the product of two irreducible quadratic polynomials.
Let
$$
f(X)=g(X)h(X)\in S_4(H),
$$
where $g$ and $h$ are irreducible quadratic monic polynomials.

Suppose first that both $g$ and $h$ are non-degenerate with roots $\al,\al'$ and $\be,\be'$,
respectively. Since $f$ is degenerate, there exist a root, say $\al$, of $g$ and a root, say $\be$, of $h$ such that $\al/\be =\eta$ is a root of unity. Then, mapping $\al$ to
$\al'$ we obtain $\al'=\be' \eta'$ with a root of unity $\eta'$.
(If $\al'=\be \eta'$, then the quotient $\al/\al'=\eta/\eta'$ is a root of unity, which is not the case.) From $\al \al'=\be \be' \eta \eta'$ we deduce that
$\eta \eta'= \pm 1$, since $\al\al', \be\be' \in \Z$. Thus,
$g(X)=X^2+uX+v \in \Z[X]$ and $h(X)=X^2+wX \pm v \in \Z[X]$.
From $v^2 \le H$ and $\max(|uw|,|2v+uw|) \le H$, we derive that $|v| \le \sqrt{H}$
and $|uw| \le H+2\sqrt{H}$. Hence, for $H$ large enough, the number of such integer triplets $(u,w,v)$ is bounded above by
$O(H^{3/2}\log H)=O(H^2)$.

From now on, we assume that at least one of $g$ and $h$ is degenerate.
Without loss of generality, we assume that $g$ is degenerate. Here, we briefly follow the proof of Lemma \ref{linear}. Notice that $M(f)=M(g)M(h)$. Thus,
$$
H(f)\ll H(g)H(h) \ll H(f).
$$
If $H(g)=k\le H$, where $k \in \N$, then $H(h)\ll H/k$, so the number of such polynomials $h$ is $O((H/k)^2)$, and, as in the proof of  Proposition \ref{trivial}, there are finitely many choices of $g$ (since we choose one coefficient to be $\pm k$). Then, the number of such polynomials $f$ can be bounded above by
\begin{equation}\label{R44}
\sum\limits_{k=1}^{H}(H/k)^2< H^2\sum\limits_{k=1}^{\infty}1/k^2 =\zeta(2)H^2
\end{equation}
up to some constant.

Summarizing the above results, we obtain $R_4(H)\ll H^{2}$.
\end{proof}

Finally, as an application of the above results, we complete the proof of Theorem~\ref{RnH}.

\begin{proof}[Proof of Theorem~\ref{RnH}]
Evidently, each reducible polynomial $f\in S_2(H)$ must have the form $f(X)=X^2-a^2$ for some $a\in\N$ satisfying $a^2 \le H$. Hence, $R_2(H)=\lfloor \sqrt{H} \rfloor$.

Note that Propositions \ref{Rn}, \ref{R3} and \ref{R4} give the required upper bounds, so we only need to prove the lower bounds on $R_n(H)$ for $n \ge 3$.

Notice that, for any $a,b\in\N$, the polynomial $$f(X)=(X+a)(X^2+b)=X^3+aX^2+bX+ab$$ is reducible and degenerate. Since different pairs $(a,b)$ give different polynomials, and $f \in S_3(H)$ when $ab \le H$, we find that
$$
R_3(H) \ge \sum\limits_{k=1}^{H} \lfloor H/k \rfloor.
$$
By the properties of harmonic series, we obtain $R_3(H) \gg H\log H$.

Similarly, for $n\ge 4$, we consider
$$
f(X)=(X^2+1)g(X) \in\Z[X]
$$
with $g(X)=X^{n-2}+b_1X^{n-3}+\cdots+b_{n-2}\in\Z[X]$ whose coefficients satisfy
$1 \le b_1,\dots, b_{n-2} \le \lfloor H/2 \rfloor$.
Obviously, such $f$ is reducible, degenerate and of height at most $H$.
There are exactly $\lfloor H/2 \rfloor^{n-2}$ of such polynomials.
Consequently, $R_n(H) \gg H^{n-2}$.
\end{proof}

\section{Proof of Theorem~\ref{mainn}}\label{DnH*}

In order to prove Theorem \ref{mainn}, we first state the following sharp bounds for $I_n^{*}(H)$ and $R_n^{*}(H)$, respectively.

\begin{theorem}\label{InH*}
Let $n\ge 2$ and $H \ge 1$ be two integers, and let $p$ be the smallest prime divisor of $n$. Then
$$H^{1+n/p}\ll I_n^{*}(H) \ll H^{1+n/p}.$$
\end{theorem}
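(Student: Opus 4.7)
The plan is to parallel the proof of Theorem~\ref{InH}. First I decompose
\begin{equation*}
I_n^*(H) = \sum_{\ell \mid n,\ \ell \ge 2} I_{n,\ell}^*(H),
\end{equation*}
where $I_{n,\ell}^*(H)$ counts $f \in S_n^*(H)$ that are irreducible in $\Q[X]$ with $\ell(f) = \ell$, and I would establish the non-monic analogue of Proposition~\ref{Inl}, namely $H^{1+n/\ell} \ll I_{n,\ell}^*(H) \ll H^{1+n/\ell}$ for each divisor $\ell \ge 2$ of $n$. Theorem~\ref{InH*} then follows by taking the dominant term at $\ell = p$.

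For the upper bound on $I_{n,\ell}^*(H)$, to each $f$ with leading coefficient $a_0$ and equivalence classes $C_1,\dots,C_s$ (where $s = n/\ell$), I would attach the polynomial
\begin{equation*}
g(Y) := a_0 \prod_{j=1}^{s}(Y - \beta_j), \qquad \beta_j = \prod_{\alpha \in C_j}\alpha,
\end{equation*}
of degree $s$ with leading coefficient $a_0$. Since the roots in each $C_j$ share a common modulus $r_j$, a direct calculation gives $M(g) = |a_0|\prod_j\max(1,r_j)^{\ell} = M(f)$, whence $H(g) \ll M(g) \ll H$ by~\eqref{Mahler}. Just as in the monic case, the pair $(a_0,g)$ recovers $f$ up to finitely many choices, because each $\beta_j$ determines its class representative $\alpha_j$ up to an $\ell$-th root combined with a root of unity of bounded order. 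Counting gives $O(H)$ choices for $a_0$ and $O(H^s)$ choices for the $s$ non-leading coefficients of $g$ (each bounded by a constant times $H$), hence $O(H^{1+n/\ell})$.

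For the lower bound I would adapt the Eisenstein construction of Proposition~\ref{Inl}. Take
\begin{equation*}
g(X) = a_0 X^{s} - 2b_1 X^{s-1} - \cdots - 2b_{s-1}X - 2(2b_s - 1) \in \Z[X]
\end{equation*}
with $a_0$ odd and $|a_0| \le H$, $1 \le b_1,\dots,b_{s-1} \le \lfloor H/2\rfloor$ and $1 \le b_s \le \lfloor H/4\rfloor$. Each such $g$ is Eisenstein at $2$, and by Lemmas~\ref{Cauchy} and~\ref{Ferguson} it has a unique positive real root strictly larger in modulus than all other roots, which forces $\ell(g) = 1$, so $g$ is non-degenerate. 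Then $f(X) := g(X^\ell)$ is Eisenstein at $2$, hence irreducible in $\Q[X]$, of degree $n$ and height at most $H$, with $\ell(f) = \ell$ exactly (non-degeneracy of $g$ prevents the classes from merging). This produces $\gg H^{1+s} = H^{1+n/\ell}$ polynomials in $S_{n,\ell}^*(H)$.

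The main obstacle is proving that $g(Y) \in \Z[Y]$ rather than merely in $\Q[Y]$: in the non-monic setting the $\beta_j$'s are algebraic numbers but not in general algebraic integers. I would establish this by applying the monic case of Proposition~\ref{Inl} to $\tilde f(X) = a_0^{n-1} f(X/a_0) \in \Z[X]$, whose roots $a_0\alpha_i$ are algebraic integers, to obtain $\tilde g(Y) = \prod_j(Y - a_0^\ell \beta_j) \in \Z[Y]$. The identity $\tilde g(a_0^\ell Y) = a_0^{n-1} g(Y)$ then reduces the claim to the divisibilities $a_0^{n-1-\ell k} \mid \tilde g_k$ for each coefficient, which must be extracted by analysing the elementary symmetric functions of the $a_0^\ell \beta_j$ in terms of the algebraic integers $a_0\alpha_i$ grouped by equivalence class. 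Once this integrality step is secured, the remainder of the argument proceeds in complete analogy with the monic proof, the extra factor of $H$ arising naturally from the range of the leading coefficient $a_0$.
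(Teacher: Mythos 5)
Your proposal follows essentially the same route as the paper: the same decomposition over divisors $\ell$ of $n$, the same associated polynomial $g(Y)=a_0\prod_{j}(Y-\beta_j)$ of degree $s=n/\ell$ with $M(g)=M(f)$, finitely many $f$ per $g$, and the same Eisenstein-at-$2$ construction with an odd leading coefficient for the lower bound (the paper takes leading coefficient $2b_0-1$ with $1\le b_0\le\lfloor H/2\rfloor$, which is the same idea).

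The one place where you stop short is the step you yourself flag as the main obstacle, the integrality $g\in\Z[Y]$, and the detour you propose does not by itself close it: from $\tilde g(Y)=\prod_j(Y-a_0^{\ell}\beta_j)\in\Z[Y]$ you only learn that $a_0^{k\ell}e_k(\beta_1,\dots,\beta_s)\in\Z$, i.e.\ that the denominator of $e_k(\beta_1,\dots,\beta_s)$ divides $a_0^{k\ell}$, which does not imply that it divides $a_0$. The clean fix, and the one the paper uses, is the classical fact (a consequence of Gauss's lemma / the content formula) that for $f(X)=a_0\prod_{i=1}^{n}(X-\alpha_i)\in\Z[X]$ every product $a_0\prod_{i\in I}\alpha_i$ with $I\subseteq\{1,\dots,n\}$ is an algebraic integer. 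Each coefficient $\pm a_0e_k(\beta_1,\dots,\beta_s)$ of $g$ is a sum of such products (over index sets $I$ of size $k\ell$), hence an algebraic integer, and being rational it lies in $\Z$. With that one substitution your argument is complete and coincides with the paper's.
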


\begin{theorem}\label{RnH*}
For integers $n\ge 2$ and $H\ge 2$, we have
$$H \log H \ll R_2^{*}(H) \ll H \log H,$$
$$H^2\log H \ll R_3^{*}(H) \ll H^2\log H,$$
$$\qquad\qquad H^{n-1} \ll R_n^{*}(H) \ll H^{n-1} \quad (n\ge 4).$$
\end{theorem}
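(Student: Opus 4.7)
The plan is to mirror the arguments of Section \ref{RnHH}, adapting each step to the non-monic setting. The key bookkeeping difference is that the number of integer linear polynomials $aX+b$ with $a\ne 0$ and height exactly $k$ is $\Theta(k)$, rather than $O(1)$ as in the monic case; this single difference propagates to produce the extra factor of $H$ in Theorem \ref{RnH*} compared with Theorem \ref{RnH} whenever $n \ge 3$. The multiplicativity of the Mahler measure together with \eqref{Mahler} remains in force, so $H(f) \asymp H(g)H(h)$ in any factorization $f=gh$.

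For the upper bound when $n\ge 4$, I will first prove a non-monic analogue of Lemma~\ref{linear}: the number of $f\in S_n^{*}(H)$ having a linear factor is $O(H^{n-1})$. Writing $f = g\cdot h$ with $g=aX+b$ and $h$ of degree $n-1$, I fix $H(g)=k$, so $H(h)\ll H/k$. Since $f$ is degenerate, $h$ must be either degenerate or reducible (the latter because a rational root of $g$ can only match a root of $h$ via quotient $\pm 1$), and in both cases the count is $O((H/k)^{n-1})$ by Proposition~\ref{trivial} and Lemma~\ref{reducible2}. Summing over the $O(k)$ choices of $g$ gives
$$\sum_{k=1}^{H} k\cdot (H/k)^{n-1} = H^{n-1}\sum_{k=1}^{H} k^{2-n}\ll H^{n-1}$$
for $n\ge 4$. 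The remaining $f$ without a linear factor are bounded by Lemma~\ref{reducible2} summed over the degrees $2\le k\le \lfloor (n-1)/2\rfloor$ of the smallest irreducible factor, giving $O(H^{n-1})$; the middle-degree case $k=n/2$ with $n\ge 6$ even is handled by an analogous sum-over-height argument, while the case $n=4$ with two irreducible quadratic factors requires the coefficient analysis of Proposition~\ref{R4} (both non-degenerate factors force $\eta\eta' = \pm 1$ and hence a relation between their constant terms and leading coefficients).

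For $n=3$, every reducible $f$ has a linear factor, and the same sum-over-$k$ specializes to $\sum_k k\cdot D_2^{*}(H/k) \ll \sum_k k\cdot (H/k)^2 \asymp H^2\log H$, matching the monic Proposition~\ref{R3} up to the extra factor of $H$. For $n=2$, reducibility combined with degeneracy forces $a_1=0$ (the two opposite roots sum to zero) and $-a_2/a_0$ to be a positive rational square; writing $-a_2/a_0 = (p/q)^2$ in lowest terms uniquely produces the parameterization $a_0 = \pm m q^2$, $a_2 = \mp m p^2$ with positive integers $m,p,q$ and $\gcd(p,q)=1$, and the count of such triples with $mp^2,mq^2\le H$ is $\asymp \sum_{m\le H} H/m \asymp H\log H$.

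The lower bounds will come from explicit constructions. For $n=2$, the parameterization above already furnishes $\gg H\log H$ reducible degenerate quadratics. For $n=3$, I will take polynomials of the form $(aX+b)(cX^2+d)$ with $a,b,c,d$ positive integers under suitable height constraints; an analogous double sum yields $\gg H^2\log H$. For $n\ge 4$, taking $(X^2+1) g(X)$ with $g\in\Z[X]$ arbitrary of degree $n-2$, height at most $\lfloor H/2\rfloor$ and nonzero leading coefficient gives $\gg H^{n-1}$ examples at once, since there are $\asymp H^{n-1}$ such $g$. The main obstacle is the $n=2$ case: here the sum-over-$k$ reduction collapses (both factors are linear, and $\sum_k k\cdot (H/k) \asymp H^2$ is far too weak), and one must instead rely on the arithmetic input that the count of positive integer pairs $(A,C)\in[1,H]^2$ with $AC$ a perfect square is $\asymp H\log H$, a fact which follows from the squarefree-part parameterization and the asymptotic $\sum_{s\text{ squarefree},\,s\le H} 1/s \asymp \log H$.
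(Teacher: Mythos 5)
Your plan coincides with the paper's proof at every step: the same reduction over $H(g)=k$ using multiplicativity of the Mahler measure (with the $O(k)$ count of linear factors of height $k$ supplying the extra power of $H$), the same appeal to Proposition~\ref{trivial} and Lemma~\ref{reducible2} for the cofactor, the same square--parameterization of reducible degenerate quadratics, and the same explicit families $(aX+b)(cX^2+d)$ and $(X^2+1)g(X)$ for the lower bounds. For $n=2$, $n=3$ and $n\ge 5$ your outline is complete and correct; for the $n=3$ lower bound you should still record, as the paper does, that one must normalize (e.g.\ take $(c,d)$ coprime) so that distinct quadruples yield distinct polynomials, but that costs only a constant.

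The one genuine gap is the $n=4$ subcase of two irreducible quadratic factors, which you dispose of by saying it ``requires the coefficient analysis of Proposition~\ref{R4}.'' That analysis does not transfer. In the monic case $\eta\eta'=\pm1$ forces the two constant terms to agree up to sign, so $v^2\le H$ gives $|v|\le\sqrt H$ immediately; in the non-monic case, with $g=tX^2+uX+v$ and $h=t_1X^2+wX+v_1$, the same relation only yields the proportionality $|v/t|=|v_1/t_1|$, which by itself bounds nothing. The degeneracy condition must do real work here: already in the monic case the number of quartics splitting into two quadratics is of order $H^2\log H$ by Lemma~\ref{reducible}, exceeding the target $H^2$, and the corresponding non-monic count exceeds the target $H^3$. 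The paper closes this case by a further dichotomy: if both root pairs are real then $h$ is a constant multiple of $tX^2-uX+v$ and one sums over that constant; if a pair is complex one parameterizes $(v_1,v,t_1,t)$ by the reduced fraction $a/b=|v_1|/|v|=|t_1|/|t|$, getting $O(H(\log H)^2)$ such quadruples, and then bounds $|uw|\le 4\sqrt{|vtv_1t_1|}\le 4H$ to control $(u,w)$, for a total of $O(H^2(\log H)^3)$ in this subcase. Your proposal needs this argument (or a substitute for it) before the bound $R_4^{*}(H)\ll H^{3}$ is actually established.
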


We remark that, unlike in all formulas of Theorems~\ref{InH*} and \ref{RnH*}, the size of $R_2^{*}(H)$ is not
related to $R_2(H)=\lfloor \sqrt{H} \rfloor$ by adding an extra power of $H$.

In the following, we want to briefly follow the deductions in the previous sections to prove Theorems \ref{InH*} and \ref{RnH*}. Then, Theorem \ref{mainn} follows directly.

\begin{proof}[Proof of Theorem \ref{InH*}]
We define $I_{n,\ell}^{*}(H)$ as the number of irreducible polynomials $f\in S_n^{*}(H)$ such that $\ell(f)=\ell$, and write $s(f)$ as $s$, where $\ell(f)$ and $s(f)$ have been defined in \eqref{equivalence}.
Obviously,
$$
I_n^{*}(H)=\sum\limits_{\ell=2,\ell|n}^{n} I_{n,\ell}^{*}(H).
$$
As in Section \ref{InHH}, following the proof of Proposition \ref{Inl}, we replace the $g(X)$ given in \eqref{ggg} by
$$
g(X)=a_0(X-\be_1)\cdots(X-\be_s)\in \Z[X],
$$
where $a_0$ is the leading coefficient of the corresponding polynomial $f$.
Here, $g \in \Z[X]$, since any product $a_0 \prod_{j \in I} \al_j$, where
$\al_1,\dots,\al_n$ are the roots of $f$ and
$I$ is a subset of the set $\{1,2,\dots,n\}$, is an algebraic integer.
Now, as in Proposition~\ref{Inl}, this gives the upper bound $I_{n,\ell}^{*}(H) \ll H^{1+n/\ell}$.

For the lower bound,
instead of \eqref{ggg1}, we consider
$$g(X)=(2b_0-1)X^m-2b_1X^{m-1}-\dots-2b_{m-1}X-2(2b_m-1) \in \Z[X],$$
where $1 \le b_0,b_1,\dots, b_{m-1} \le \lfloor H/2 \rfloor$,
$ 1 \le b_m \le  \lfloor H/4 \rfloor$.
There are at least $\lfloor H/4\rfloor^{m+1}$
of such polynomials $g$ for $H \ge 4$.
As before, by Lemmas \ref{Ferguson} and \ref{Cauchy}, let $\ga$ be the unique positive root of $g$, its other roots $\ga_2,\dots,\ga_m$ lie in the disc $|z|<\ga$. Thus, the irreducible polynomial $g$ has only one root on the circle $|z|=\ga$, this implies that $\ell(g)=1$. So, $g$ is non-degenerate.
 Since $m=n/\ell$,
for $\ell|n$ (and $\ell \ge 2$), as in the proof of Proposition~\ref{Inl},
we deduce that
$$
H^{1+n/\ell}\ll I_{n,\ell}^{*}(H) \ll H^{1+n/\ell},
$$
which implies Theorem \ref{InH*}.
\end{proof}

\begin{proof}[Proof of Theorem~\ref{RnH*}]
To get an analogue of Lemma \ref{linear}, we note that the form of $f$ of degree $n \ge 4$ becomes
$$
f(X)=g(X)h(X)\in S_n^{*}(H),
$$
where $g(X)=a_0X+a_1$ , and $h(X)$
is an integer polynomial of degree $n-1$. Fix an integer $k$ with $1\le k\le H$, let $H(g)=k$, then the number of such polynomials $g$ with $H(g)=k$ is $O(k)$, and as before, we have $H(h)\ll H/k$.
Since $f$ is degenerate, $h$ is either degenerate or it has the linear factor $a_0X-a_1$
(and so $a_1\ne 0$, and $h$ of degree $n-1$ is reducible).
In both cases, either by
Proposition~\ref{trivial} or, this time, by Lemma~\ref{reducible2}, the number of such polynomials $h$ is $O((H/k)^{n-1})$.
Then, the bound in \eqref{linear2} becomes
$$
\sum\limits_{k=1}^{H}k(H/k)^{n-1}< H^{n-1} \sum\limits_{k=1}^{\infty}(1/k)^{n-2}=\zeta(n-2)H^{n-1},
$$
which implies the desired upper bound $O(H^{n-1})$ for $n\ge 4$. Then as in the proof of Proposition \ref{Rn} (this time, applying Lemma \ref{reducible2}), we get the analogue of Proposition~\ref{Rn}
$$
R_n^{*}(H)\ll H^{n-1} \quad \textrm{for each} \quad n\ge 5.
$$

Following the proof of Proposition \ref{R3}, we replace the form of $f$ by $f=g(X)h(X)\in S_3^{*}(H),$ where $g(X)=a_0X+a_1$, and $h(X)$ is a quadratic degenerate polynomial. Note that
the number of such polynomials $g$ with $H(g)=k$ is $O(k)$  for every $k \in \N$. Using Proposition~\ref{trivial}, we see that
the bound in \eqref{R33} becomes
$$
\sum\limits_{k=1}^{H}k(H/k)^2=H^2\sum\limits_{k=1}^{H}1/k\ll H^2\log H.
$$ Consequently,
$$R_3^{*}(H)\ll H^2\log H,$$
which is the analogue of Proposition~\ref{R3}.

To get an upper bound for $R_4^{*}(H)$, we should change the form of $f$ in the proof of Proposition \ref{R4} to be
$$
f(X)=g(X)h(X)\in S_4^{*}(H),
$$
where $g$ and $h$ are irreducible quadratic polynomials.

Assume that both $g$ and $h$ are non-degenerate. We also borrow the notation from the proof of Proposition \ref{R4}. Since $f$ is degenerate, there exist a root $\al$ of $g$ and a root $\be$ of $h$ such that $\al/\be =\eta$ is a root of unity. Multiplying with
$\al'=\be'\eta'$, we find that
$\al \al'=\be \be' \eta \eta'$. Hence,
$\eta \eta'= \pm 1$, since $\al\al', \be\be' \in \Q$. Thus,
$g(X)=tX^2+uX+v \in \Z[X]$ and $h(X)=t_1 X^2+wX +v_1 \in \Z[X]$,
where
\begin{equation}\label{vvv}
|v/t|=|v_1/t_1|.
\end{equation}
Without restriction of generality, we may assume that $g$ is primitive, namely, no prime $p$ divides all three coefficients $t,u,v$ of $g$.

Suppose first that both pairs $\al,\al'$ and $\be,\be'$ are real. Then
$\eta=\al/\be=-1$ and $\eta'=\al'/\be'=-1$. So, notice that $g$ is primitive, if $g(X)=tX^2+uX+v \in \Z[X]$, then
$h(X)=c(tX^2-uX+v)$ with some non-zero $c \in \Z$. Since the Mahler measures of $g$ and $h/c$ are the same
 and
$$
M(g)M(h/c)\ll H(g)H(h/c)\ll H(gh/c)=H(f)/c,
 $$,
 we have $H(g) \ll M(g) \ll \sqrt{H/|c|}.$ This gives the following upper bound for the number of $(t,u,v,c) \in \Z^4$
$$\sum_{|c|=1}^{H} |c| (H/|c|)^{3/2}= H^{3/2} \sum_{|c|=1}^{H}
|c|^{-1/2} \ll H^2$$
up to some constant.

Suppose next that at least one pair of conjugates, say $\be,\be'$, are complex (non-real) numbers. Then they are complex conjugates. Moreover, the conjugates $\al$ and $\al'$ have the same modulus.
Since $\al/\al' \ne -1$ ($g$ is non-degenerate), the numbers $\al,\al'$ are also complex conjugate numbers. From $$|\al|=|\al'|=|\be|=|\be'|=R>0$$ and
$$g(X)h(X)=(tX^2+uX+v)(t_1X^2+wX+v_1),$$
we find that $|vv_1| \le H$ and $|tt_1| \le H$.
Consider irreducible rational fraction
$a/b$ with $a, b \in \N$ satisfying $ab \le H$.
Let $(v_1,v,t_1,t) \in \Z^4$ be a vector satisfying $|v_1|/|v|=|t_1|/|t|=a/b$
(see \eqref{vvv}), where $1 \le ab, |tt_1|, |vv_1| \le H$. Then, for some
$s,s_1 \in \N$, we must have $$|v_1|=sa, \>\>\> |v|=sb, \>\>\> |t_1|=s_1 a, \>\>\> |t|=s_1 b,$$ so that $s,s_1 \le \sqrt{H/ab}$.  This gives at most $ss_1 \le H/ab$ vectors
$(|v_1|,|v|,|t_1|,|t|) \in \N^4$ corresponding to the fixed fraction $a/b$.
 Therefore, the number of such vectors
$(v_1,v,t_1,t) \in \Z^4$ is bounded from above (up to some constant) by
$$\sum_{a=1}^H \sum_{b=1}^{\lfloor H/a \rfloor} \frac{H}{ab} =O(H(\log H)^2).$$

Also, $u=-t(\al+\al')$, so $$|u| \le |t|(|\al|+|\al'|) = 2|t|R=2|t| \sqrt{|v|/t}=2\sqrt{|vt|}.$$ Similarly, $|w| \le 2\sqrt{|v_1t_1|}$. Thus, $|uw| \le 4 \sqrt{|vtv_1t_1|} \le 4H$. Clearly, there are $O(H \log H)$ of such pairs $(u,w) \in \Z^2$. Therefore, in case $\be,\be'$ are complex (non-real) numbers, we bound the number of vectors $(v_1,v,t_1,t,u,w) \in \Z^6$ from above by $O(H^2 (\log H)^3)$.

Summarizing the above results, when none of the quadratic factors $g$ and $h$ is degenerate,
the bound is $O(H^2 (\log H)^3)$, which is better than the required bound $O(H^3)$.

It remains to consider the case when at least one of the factors $g,h$, say $g$, is degenerate.
Then, in view of Proposition~\ref{trivial}, the bound in \eqref{R44} becomes
$$
\sum\limits_{k=1}^{H}k(H/k)^3< H^3\sum\limits_{k=1}^{\infty}1/k^2 =\zeta(2)H^3.
$$
This yields the required analogue of Proposition~\ref{R4}, namely,
$$R_4^{*}(H)\ll H^3.$$

Now, we will complete the proof of Theorem \ref{RnH*}.
It is easy to see that each reducible polynomial $f\in S_2^{*}(H)$ must be of the form $$f(X)=\pm c(a^2X^2-b^2)$$ for some $a,b\in\N$ and some square-free $c \in \N$. From $1 \le c \le H$ and $1 \le a, b \le \lfloor \sqrt{H/c} \rfloor$, we obtain
\begin{equation}\label{R2*}
R_2^{*}(H)=2 \sum_{c=1, \> c \>-\>\text{square-free}}^H \lfloor \sqrt{H/c} \rfloor^2.
\end{equation}
Using the fact that the sum $\sum' 1/c$
taken over square-free $c$ in the range $1\le c \le H$ satisfies $\log H \ll \sum' 1/c \ll \log H$, we obtain the required bounds $H\log H \ll R_2^{*}(H) \ll H\log H$.

To get a lower bound for $R_3^{*}(H)$ observe that, for any  $a,b,c,d\in\N$, the polynomial $$f(X)=(aX+b)(cX^2+d)=acX^3+bcX^2+adX+bd$$ is reducible and degenerate. Moreover, if $c$ and $d$ are coprime, then different integer vectors $(a,b,c,d) \in \Z^4$ give different $f$. By counting all the vectors
$(a,b,c,d) \in \N^4$ satisfying $1\le a\le b\le H, 1 \le c,d \le H/b$, we find
that there are exactly
$$
\sum\limits_{b=1}^{H} b \lfloor H/b \rfloor^2 \gg H^2\log H
$$
of them. The probability that two integers are coprime is $1/\zeta(2)$. So, taking into account only coprime pairs $(c,d)$ in the above sum, we will still get $\gg H^2 \log H$
of such vectors $(a,b,c,d)$ (with another constant implied in $\gg$ and not depending on $H$).
 Consequently, we obtain $R_3^{*}(H) \gg H^2\log H$.

Finally, for $n\ge 4$, let us consider
$$
f(X)=(X^2+1)g(X) \in\Z[X],
$$
where $g(X)=b_0X^{n-2}+b_1X^{n-3}+\cdots+b_{n-2}\in\Z[X]$ with coefficients $1 \le b_0,\dots,b_{n-2} \le \lfloor H/2\rfloor$. Obviously, such $f$ are all reducible, degenerate and have height at most $H$.
There are exactly $\lfloor H/2\rfloor^{n-1}$ of such vectors  $(b_0,\dots,b_{n-2})$ and each of them gives a different polynomial $f$.
This yields $R_n^{*}(H) \gg H^{n-1}$, and completes
the proof of Theorem~\ref{RnH*}.
\end{proof}

\section{Some asymptotic formulas}\label{comment}

It would be of interest to find out whether there exist asymptotic (or exact) formulas for all the quantities we consider, especially for $D_n(H)$ and $D_n^*(H)$. In general, this seems to be a difficult problem, even though
it is very likely that both the limits $\lim_{H\to \infty}D_n(H)/H^{n-2}$ and $\lim_{H\to \infty}D_n^*(H)/H^{n-1}$ exist for any $n\ge 4$. However, as the results of \cite{mnn} indicate, one should not expect any nice formulas for the constants involved in the main terms of $R_n(H)$ and $R_n^*(H)$ for
$n \ge 4$.

Here, we will consider some special cases and let the reader taste where the difficulties may come for other values of $n$.

\begin{theorem}\label{limit2}
For each $H \ge 1$ we have $R_2(H)=\lfloor \sqrt{H} \rfloor$,
$$I_2(H)=2H+\lfloor \sqrt{H} \rfloor + 2\lfloor \sqrt{H/2} \rfloor + 2\lfloor \sqrt{H/3}\rfloor,$$
$$D_2(H)=2H+2\lfloor \sqrt{H} \rfloor + 2\lfloor \sqrt{H/2} \rfloor + 2\lfloor \sqrt{H/3}\rfloor.$$
\end{theorem}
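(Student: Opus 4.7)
The plan is to parameterize a monic degenerate quadratic as $f(X)=X^2+aX+b$ with $|a|,|b|\le H$ and split the count according to the arithmetic nature of its two (necessarily distinct) roots $\al,\be$.

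The reducible part is immediate: monicity forces $\al,\be\in\Z$, so their ratio, being a rational root of unity other than $1$, equals $-1$; hence $\al=-\be$ and $f(X)=X^2-k^2$ with $k\in\N$ and $k^2\le H$. This yields $R_2(H)=\lfloor\sqrt H\rfloor$.

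For $I_2(H)$ I would split on the sign of the discriminant $\Delta=a^2-4b$. When $\Delta>0$, the roots are real and irrational, so the ratio must again be $-1$, forcing $a=0$ and $f(X)=X^2-d$ with $1\le d\le H$ not a perfect square; this contributes $H-\lfloor\sqrt H\rfloor$ polynomials. When $\Delta<0$, the roots are complex conjugates $\al,\bar\al$, and $\zeta:=\al/\bar\al$ is a root of unity lying in the imaginary quadratic field $\Q(\al)$. Here I would invoke the classical fact that the only imaginary quadratic fields with a non-trivial torsion subgroup in their unit group are $\Q(i)$ and $\Q(\sqrt{-3})$, so $\zeta$ belongs to a set of eight prescribed values; excluding $\zeta=1$ (which would make the roots equal) and writing $\al=re^{i\theta}$ with $r>0$ and $\theta\in(0,\pi)$ then forces $\theta\in\{\pi/6,\pi/4,\pi/3,\pi/2,2\pi/3,3\pi/4,5\pi/6\}$.

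The relations $a=-2r\cos\theta$ and $b=r^2$ translate each admissible $\theta$ into a one-parameter integer family, and the integrality constraints on $a,b$ give: $\theta=\pi/2$ yields $a=0$ and $1\le b\le H$, so $H$ polynomials; $\theta\in\{\pi/3,2\pi/3\}$ yields $b=a^2$, so $2\lfloor\sqrt H\rfloor$ polynomials; $\theta\in\{\pi/4,3\pi/4\}$ forces $a=\pm 2m$ and $b=2m^2$, so $2\lfloor\sqrt{H/2}\rfloor$ polynomials; and $\theta\in\{\pi/6,5\pi/6\}$ forces $a=\pm 3m$ and $b=3m^2$, so $2\lfloor\sqrt{H/3}\rfloor$ polynomials. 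These four families, together with the $\Delta>0$ contribution, are pairwise disjoint by inspection of the invariant $(a,\operatorname{sign}(b),a^2/b)$, and summing the five counts yields the formula for $I_2(H)$; adding $R_2(H)$ then gives $D_2(H)$. The one non-routine ingredient is the classification of roots of unity in imaginary quadratic fields (the real obstacle, though a standard one); everything else reduces to elementary integrality manipulations.
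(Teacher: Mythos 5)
Your proof is correct and follows essentially the same route as the paper: both arguments reduce to listing the possible root-of-unity ratios of the two roots (necessarily of order $2$, $3$, $4$ or $6$) and counting the resulting one-parameter families $X^2-d$, $X^2+b$, $X^2+aX+a^2$, $X^2+2mX+2m^2$, $X^2+3mX+3m^2$, together with the observation that reducibility forces $X^2-k^2$. Your parameterization by the argument $\theta$ of a complex root and your appeal to the torsion classification in imaginary quadratic fields are just a repackaging of the paper's observation that the quotient of the two roots is a root of unity of degree at most $2$; the resulting counts and the disjointness check agree with the paper's.
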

\begin{proof}
By Theorem~\ref{RnH} and $D_2(H)=I_2(H)+R_2(H)$, we see that it suffices to prove the formula for $I_2(H)$.

For any irreducible polynomial $f\in S_2(H)$, let $\al \ne \be$ be its roots. Then, the degree of the root of unity $\al/\be$ must be at most 2, so $\al/\be=-1,\pm i, (-1\pm i\sqrt{3})/2$ or $(1\pm i\sqrt{3})/2$, where $i=\sqrt{-1}$.

If $\al/\be=-1$, we must have $f(X)=X^2-a$, where $a$ is not a perfect square and $1\le |a|\le H$. So, the number of such polynomials $f$ is $2H-\lfloor \sqrt{H} \rfloor$.

If $\al/\be=\pm i$, we have $f(X)=X^2+ 2aX+2a^2$, where $a\in \Z$ and $1\le |a|\le \lfloor \sqrt{H/2} \rfloor$. Note that each $a$ corresponds to a different $f$. Clearly, the number of these polynomials $f$ is $2\lfloor \sqrt{H/2} \rfloor$.

If $\al/\be=(-1\pm i\sqrt{3})/2$, we have $f(X)=X^2+ aX+a^2$, where $a\in \Z, 1\le |a|\le \lfloor \sqrt{H} \rfloor$.  The number of these polynomials $f$ is $2\lfloor \sqrt{H} \rfloor$.

Finally, if $\al/\be=(1\pm i\sqrt{3})/2$, we have $f(X)=X^2+ 3aX+3a^2$, where $a\in \Z$ and $1\le |a|\le \lfloor \sqrt{H/3} \rfloor$. The number of such polynomials $f$ is $2\lfloor \sqrt{H/3} \rfloor$.

Summarizing the above results, we obtain exactly $2H+\lfloor \sqrt{H} \rfloor + 2\lfloor \sqrt{H/2} \rfloor + 2\lfloor \sqrt{H/3}\rfloor$ irreducible polynomials contained in $S_2(H)$.
\end{proof}

\begin{theorem}\label{limit3}
We have $\lim_{H\to \infty}D_3(H)/(H\log H)=4$.
\end{theorem}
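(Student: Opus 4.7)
We have $D_3(H) = I_3(H) + R_3(H)$, and Theorem~\ref{InH} gives $I_3(H) \ll H$ (the smallest prime divisor of $3$ is $3$, yielding exponent $3/3 = 1$), which is negligible against $H \log H$. So the plan reduces to proving the asymptotic $R_3(H) = 4H \log H + O(H)$.

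The key structural step is a unique-factorization lemma: every reducible degenerate monic cubic $f \in \Z[X]$ admits a \emph{unique} decomposition $f(X) = (X + \beta) h(X)$ with $\beta \in \Z$ and $h(X) \in \Z[X]$ a monic degenerate quadratic. Existence is the swap trick used in Proposition~\ref{R3}: if the cofactor of an initially chosen linear factor is non-degenerate, then $f$ must split completely over $\Z$ and relabelling the roots produces a degenerate quadratic cofactor of the form $X^2 - a^2$. For uniqueness, if $f$ had two such decompositions with distinct linear factors, it would split completely as $(X+\beta_1)(X+\beta_2)(X-\gamma)$, and the requirement that both quadratic cofactors $(X+\beta_j)(X-\gamma)$ be degenerate (i.e., have two distinct roots of rational ratio $\pm 1$) forces a repeated-root contradiction. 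Given this lemma, I enumerate the monic degenerate quadratics in $\Z[X]$ via the classification underlying Theorem~\ref{limit2}: the \emph{main family} consists of $h(X) = X^2 + c$ with $c \in \Z \setminus \{0\}$ (root ratio $-1$, covering both reducible and irreducible instances), and three \emph{residual families} are $X^2 + 2aX + 2a^2$, $X^2 + aX + a^2$, $X^2 + 3aX + 3a^2$ with $a \in \Z \setminus \{0\}$.

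For the main family, $f(X) = X^3 + \beta X^2 + c X + \beta c$, so $H(f) \le H$ is equivalent to $|c| \le H$ and $|\beta c| \le H$ (the bound $|\beta| \le H$ being automatic for $|c| \ge 1$). The count is
\begin{equation*}
\sum_{c \in \Z \setminus \{0\},\, |c| \le H} \bigl(2 \lfloor H/|c| \rfloor + 1\bigr) = 4 \sum_{c=1}^{H} \lfloor H/c \rfloor + 2H = 4H \log H + O(H),
\end{equation*}
using the standard estimate $\sum_{c=1}^H \lfloor H/c \rfloor = H \log H + O(H)$. For each residual family, the constant term of $f$ is a nonzero integer multiple of $a^2 \beta$, so $H(f) \le H$ forces $|\beta| \ll H/a^2$, and the total contribution is $\ll \sum_{a \ge 1} H/a^2 = O(H)$.

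Combining these estimates gives $R_3(H) = 4H \log H + O(H)$, hence $D_3(H)/(H \log H) \to 4$. The chief obstacle is the uniqueness assertion in the factorization lemma: without it, fully-split cubics of the form $(X+\beta)(X-a)(X+a)$ would be overcounted under alternative linear-times-quadratic representations, and the exact constant $4$ would be lost.
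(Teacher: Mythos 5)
Your proof is correct and follows essentially the same route as the paper: reduce to $R_3(H)$ via Theorem~\ref{InH}, use the swap trick to write each reducible degenerate cubic as $(X+\beta)h(X)$ with $h$ a degenerate monic quadratic, count the ratio $-1$ family $(X+\beta)(X^2+c)$ exactly to get the main term $4H\log H$, and bound the remaining root-ratio families by $O(H)$. Your explicit uniqueness lemma for the decomposition is a nice bookkeeping refinement of what the paper handles implicitly (each $f$ in the main family determines $(\beta,c)$ from its coefficients, and the other families only contribute to the error term), but it is not a different method.
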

\begin{proof}
By Theorems \ref{InH} and \ref{RnH}, it is equivalent to prove that
$$\lim_{H\to \infty}R_3(H)/(H\log H)=4.$$

As in the proof of Proposition \ref{R3}, any reducible polynomial $f\in S_3(H)$ must have the form
$$
f(X)=g(X)h(X)\in S_3(H),
$$
where $g(X)=X+a$ for some $a\in \Z$, and $h(X)$ is a quadratic degenerate monic polynomial.

Let $\al,\be$ be the two distinct roots of $h$. Then, as above, $\al/\be=-1,\pm i, (-1\pm i\sqrt{3})/2$ or $(1\pm i\sqrt{3})/2$. If $\al/\be=-1$, we have $f(X)=(X+a)(X^2+b)$, where $1\le |ab|\le H$. Note that each pair $(a,b)$ corresponds to a different $f$. So, the number of these polynomials $f$ is exactly
\begin{equation}\label{tres}
2\sum\limits_{|b|=1}^{H}\left(2\lfloor H/|b| \rfloor+1\right).
\end{equation}

If $\al/\be$ takes other values, then $h$ is irreducible. Using the forms of polynomials just considered in the proof of Theorem \ref{limit2}, we can see that the number of such polynomials $f$ is bounded above by the number of integer pairs $(a,b)$ with $ab^2\le H$ (up to some constant), which gives the term $O(H)$.
Hence, as the main term in \eqref{tres} is $4H\log H$, we obtain $\lim_{H\to \infty}R_3(H)/(H\log H)=4$.
\end{proof}

\begin{theorem}\label{limit2*}
We have
$$\lim_{H\to \infty}I_2^*(H)/H^2=4,$$
$$\lim_{H\to \infty}R_2^*(H)/(H\log H)=2/\zeta(2)=12/\pi^2.$$
In particular, we have $\lim_{H\to \infty}D_2^*(H)/H^2=4$.
\end{theorem}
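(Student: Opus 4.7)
The plan is to estimate $I_2^*(H)$ and $R_2^*(H)$ separately and then combine them via $D_2^*(H) = I_2^*(H) + R_2^*(H)$. For $I_2^*(H)$, I would mimic the root-of-unity classification used in the monic case (Theorem~\ref{limit2}): every irreducible degenerate $f(X) = a_0 X^2 + a_1 X + a_2 \in S_2^*(H)$ has two distinct roots $\alpha, \beta$ with $\alpha/\beta$ a root of unity of degree at most $\deg\alpha = 2$, so the order $d$ of $\alpha/\beta$ lies in $\{2,3,4,6\}$. Setting $\zeta = \alpha/\beta$ and expanding $f(X) = a_0(X - \zeta\beta)(X - \beta)$ yields the identity
$$a_1^2 = (2 + \zeta + \zeta^{-1})\, a_0 a_2,$$
which specializes to $a_1 = 0$ for $d = 2$ and to $a_1^2 = c\, a_0 a_2$ with $c \in \{1,2,3\}$ for $d \in \{3,4,6\}$.

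The main term of $I_2^*(H)$ comes from $d = 2$, where $f(X) = a_0 X^2 + a_2$ with $a_0 a_2 \ne 0$. Among the $4H^2$ such pairs, the reducible ones are exactly those for which $-a_2/a_0$ is a square in $\Q$; writing $-a_2/a_0 = p^2/q^2$ in lowest terms forces $a_0 = \pm q^2 k$, $a_2 = \mp p^2 k$ for some $k \ge 1$, which gives at most $2\lfloor H/\max(p,q)^2\rfloor$ pairs per coprime $(p,q)$. Combined with $\sum_{n \le \sqrt H}\varphi(n)/n^2 = O(\log H)$, this bounds the reducible count by $O(H\log H)$. For each of the other three values of $d$, fixing $|a_1| \le H$ pins down $a_0 a_2$ up to a rational factor, so $(a_0,a_2)$ ranges over $O(d(a_1^2))$ ordered divisor pairs; summing gives $O(H(\log H)^2)$. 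Hence $I_2^*(H) = 4H^2 - O(H(\log H)^2)$, which proves the first limit.

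For $R_2^*(H)$ I would apply the explicit identity \eqref{R2*} from the proof of Theorem~\ref{RnH*} and replace $\lfloor \sqrt{H/c}\rfloor^2$ by $H/c + O(\sqrt{H/c})$. The problem then reduces to the Mertens-type asymptotic
$$\sum_{\substack{n \le x \\ n\ \text{square-free}}} \frac{1}{n} = \frac{\log x}{\zeta(2)} + O(1),$$
which follows from $\mu^2(n) = \sum_{d^2 \mid n} \mu(d)$ together with $\sum_{d \ge 1} \mu(d)/d^2 = 1/\zeta(2)$. This yields $R_2^*(H) = (2/\zeta(2)) H\log H + O(H)$ and hence the second limit, and the third limit follows immediately from $D_2^*(H) = I_2^*(H) + R_2^*(H) = 4H^2(1 + o(1))$. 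The most delicate step is the $d = 2$ case of $I_2^*(H)$: one must verify that the pairs with $-a_2/a_0$ a rational square contribute only $o(H^2)$ (in fact $O(H\log H)$), for which the coprime divisor sum above is exactly what is needed.
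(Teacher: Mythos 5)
Your proposal is correct and follows essentially the same route as the paper: the same classification of the root-of-unity ratio (order $2,3,4,6$), identification of the $a_1=0$ family $a_0X^2+a_2$ as the source of the main term $4H^2$ for $I_2^*(H)$, and the formula \eqref{R2*} together with the square-free reciprocal sum $\sum' 1/c \sim \log H/\zeta(2)$ for $R_2^*(H)$. The only difference is that you make explicit (via the identity $a_1^2=(2+\zeta+\zeta^{-1})a_0a_2$ and divisor-sum bounds $O(H(\log H)^2)$ for the orders $3,4,6$, and the coprime-pair count $O(H\log H)$ for the reducible $a_0X^2+a_2$) some error-term bookkeeping that the paper leaves implicit by referring back to Theorem~\ref{limit2}.
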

\begin{proof}
Let $f\in S_2^*(H)$ be an irreducible polynomial, and let $\al, \be$ be its two distinct roots. Then, as before, $\al/\be=-1,\pm i, (-1\pm i\sqrt{3})/2$ or $(1\pm i\sqrt{3})/2$. Applying the same argument as in the proof of Theorem \ref{limit2}, we can see that the main contribution to $I_2^*(H)$ comes from those irreducible polynomials $f$ with $\al/\be=-1$, that is, $f=aX^2+b$, where $1\le |a|,|b|\le H, (a,-b)\ne \pm c(a^\prime,b^\prime),c\in \N$ is square-free, and $a^\prime, b^\prime$ are perfect squares. Thus,
$$\lim_{H\to \infty}I_2^*(H)/H^2=4.$$

Recall that, by \eqref{R2*},
\begin{align*}
R_2^{*}(H)&=2 \sum_{k=1, \> \textrm{$k$ is square-free}}^H \lfloor \sqrt{H/k} \rfloor^2.
\end{align*}
Since every positive integer can be uniquely expressed as the product of a perfect square and a square-free integer, we have
$$
\lim_{H\to \infty}\frac{\sum_{k=1, \> \textrm{$k$ is square-free}}^{H}1/k}{\sum_{k=1}^{H}1/k}=\frac{1}{\sum_{j=1}^{\infty} 1/j^2} =\frac{1}{\zeta(2)}.
$$
Then, we obtain $\lim_{H\to \infty}R_2^*(H)/(H\log H)=2/\zeta(2)$.
The last statement of the theorem follows from $D_2^{*}(H)=I_2^{*}(H)+R_2^{*}(H)$.
\end{proof}

\begin{theorem}\label{limit3*}
We have $\lim_{H\to \infty}D_3^*(H)/(H^2\log H)=96/\pi^2$.
\end{theorem}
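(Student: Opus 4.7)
The plan is to reduce to counting reducible polynomials and then mirror the monic analysis of Theorem~\ref{limit3}, keeping track of the extra sign and primitivity bookkeeping that appears in the non-monic case. Since $D_3^*(H) = I_3^*(H) + R_3^*(H)$ and Theorem~\ref{InH*} (with $n = 3$, smallest prime divisor $p = 3$) gives $I_3^*(H) \ll H^{1+3/3} = H^2 = o(H^2 \log H)$, it will be enough to show $R_3^*(H) \sim (96/\pi^2) H^2 \log H$.

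First I would argue, as in Proposition~\ref{R3}, that every reducible degenerate $f \in S_3^*(H)$ admits a factorization $f(X) = (aX+b)h(X)$ with $aX+b \in \Z[X]$ linear, $a \ne 0$, and $h \in \Z[X]$ quadratic and degenerate; if the natural quadratic factor is non-degenerate but $f$ is degenerate, then $f$ has three rational roots and can be reorganized so that the quadratic factor becomes degenerate. The dominant contribution comes from $h(X) = cX^2 + d$ with $c, d \in \Z \setminus \{0\}$ (root ratio $-1$). The remaining degenerate quadratics $tX^2 + uX + v$ with $u \ne 0$ satisfy $u^2 \in \{tv, 2tv, 3tv\}$, and by a standard divisor estimate there are only $O(M \log M)$ of them of height at most $M$; combined with the $O(k)$ linear factors of height $k$ and $H(h) \ll H/k$, this case contributes at most $\sum_{k=1}^H k \cdot (H/k) \log(H/k) \ll H^2$, which is absorbed into the error.

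To count $f(X) = (aX+b)(cX^2+d)$ uniquely, I would normalize $(c, d)$ to be primitive, i.e.\ $c > 0$, $d \ne 0$, $\gcd(c, |d|) = 1$. Under this normalization the tuple $(a, b, c, d)$ is uniquely determined by $f$ except when $-d/c$ is a rational square (forcing $c = s^2$, $d = -t^2$), a sparse case whose total contribution is easily bounded by $O(H^2)$. The height condition $H(f) \le H$ becomes $\max(|a|, |b|) \cdot N \le H$ with $N = \max(c, |d|)$, so for each fixed primitive $(c, d)$ the number of admissible $(a, b) \in \Z^2$ with $a \ne 0$ is $2\lfloor H/N \rfloor(2\lfloor H/N \rfloor + 1) \sim 4(H/N)^2$.

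A short enumeration shows that for each $N \ge 2$ there are exactly $4\phi(N)$ primitive pairs $(c, d)$ with $c > 0$, $d \ne 0$ and $\max(c, |d|) = N$, so
$$
R_3^*(H) \sim 4H^2 \sum_{N=1}^H \frac{4\phi(N)}{N^2} = 16 H^2 \sum_{N=1}^H \frac{\phi(N)}{N^2} \sim \frac{96}{\pi^2} H^2 \log H,
$$
using the standard estimate $\sum_{N \le H} \phi(N)/N^2 = (6/\pi^2) \log H + O(1)$, which follows by M\"obius inversion from $\phi(N) = N \sum_{d \mid N} \mu(d)/d$. The main obstacle will be the bookkeeping that justifies the uniqueness of the primitive factorization and confirms that rearrangements, multiplicities, and the non-main-case contributions all fit into the $O(H^2)$ error; once the parametrization is in place, the arithmetic sum is standard.
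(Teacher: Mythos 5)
Your proposal is correct and follows essentially the same route as the paper: reduce to $R_3^*(H)$, isolate the main term coming from $f(X)=(aX+b)(cX^2+d)$ with $(c,d)$ primitive, and evaluate the count via $\sum_{N\le H}\varphi(N)/N^2\sim(6/\pi^2)\log H$. Your grouping by $N=\max(c,|d|)$ with $4\varphi(N)$ primitive pairs per $N$ is just a repackaging of the paper's two symmetric cases $1\le c<|d|$ and $c>|d|\ge 1$, each of which contributes $(48/\pi^2)H^2\log H$.
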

\begin{proof}
By Theorems \ref{InH*} and \ref{RnH*}, it is equivalent to prove that
$$\lim_{H\to \infty}R_3^*(H)/(H^2\log H)=96/\pi^2.$$

Recall that, any reducible polynomial $f\in S_3^*(H)$ must have the form
$$
f(X)=g(X)h(X)\in S_3^*(H),
$$
where $g(X)=aX+b$ for some $a,b\in \Z$, and $h(X)$ is a quadratic degenerate polynomial.

As before, the main contribution to $R_3^*(H)$ comes from those polynomials $f$ such that the quotient of the two roots of $h$ is $-1$, that is,
$f(X)=(aX+b)(cX^2+d)$ with $c>0$ and $d \ne 0$. Without loss of generality,
we may assume that $(c,d)=1$, by putting their common factor into the linear factor $aX+b$.
Note that the number of these polynomials $f$ with $b=0$ is $O(H^2)$, so in the sequel we assume that $b\ne 0$.

Now, as $1\le |a|, |b| \le \min(H/c, H/|d|)$ we consider two cases $1\le c<|d|$ and $c>|d|\ge 1$
(there are two other cases with $c=|d|=1$, but they only give $O(H^2)$ of such polynomials).
In the first case, $\min(H/c, H/|d|)=H/|d|$, the number of vectors $(a,b,c,d)$ is
\begin{align*}
8 \sum_{|d|=2}^H \lfloor H/|d| \rfloor ^2 \sum_{c=1, (c,|d|)=1}^{|d|-1} 1& \sim  8H^2 \sum_{|d|=2}^H \varphi(|d|)/|d|^2\\
&= -8H^2+8H^2 \sum_{|d|=1}^H \varphi(|d|)/|d|^2\\
&\sim (48/\pi^2) H^2 \log H,
\end{align*}
where $\varphi$ is Euler's totient function, $\sim$ is the standard asymptotic no\-ta\-tion.
(Here, we use the standard formula $\sum_{j=1}^H \varphi(j)/j^2\sim (6/\pi^2)\log H$, whereas the factor
8 comes, since we have three moduli $|a|,|b|,|d|$.)

By exactly the same argument, we can see that the same contribution $(48/\pi^2) H^2 \log H$ comes from the case $c>|d|\ge 1$. Therefore,
$\lim_{H\to \infty}R_3^*(H)/(H^2\log H)=96/\pi^2$, as claimed.
\end{proof}

\section*{Acknowledgements}
The authors would like to thank Igor E. Shparlinski for introducing them
into this topic and for providing Proposition \ref{trivial} which was their
starting point, and also for his valuable comments on an early version of this paper. The research of A.~D. was supported by the Research Council
of Lithuania Grant MIP-068/2013/LSS-110000-740. The research of M.~S. was supported by the Australian
Research Council Grant DP130100237.
They also want to thank the referee for careful reading and useful comments.

\end{document}